\newenvironment{prf}{\noindent {\bf Proof.} }{\endprf\par}
\def \endprf{\hfill  {\ensuremath{\square}}\medskip}
\numberwithin{equation}{section}
\newtheorem{thm}{Theorem}[section]
\newtheorem{prpstn}[thm]{Proposition}
\newtheorem{lmm}[thm]{Lemma}
\newtheorem{clm}[thm]{Claim}
\theoremstyle{remark}
\newtheorem{remark}[thm]{Remark}
\theoremstyle{definition}
\crefname{equation}{}{}
\crefname{lmm}{Lemma}{Lemma}
\crefname{thm}{Theorem}{Theorem}
\crefname{prpstn}{Proposition}{Proposition}
\crefname{clm}{Claim}{Claim}
\crefname{subpart}{Part}{Part}
\newcounter{part0}[subsection]
\renewcommand{\part}[1][]{\noindent\stepcounter{part0}{\bfseries Part \number\value{part0}:} #1.\par}
\newcounter{part1}[part0]
\newcommand{\subpart}[1][]{\noindent\stepcounter{part1}{\bfseries Part \number\value{part0}.\number\value{part1}:} #1.\par}
\newcounter{part2}[part1]
\newcommand{\subsubpart}[1][]{\noindent\stepcounter{part2}{\bfseries Part \number\value{part0}.\number\value{part1}.\number\value{part2}:} #1.\par}
\DeclareMathOperator{\supp}{supp}
\renewcommand{\d}{\, \mathop{\!\mathrm{d}}\!}
\newcommand{\dyw}{:=}
\newcommand{\R}{{\mathbb{R}}}
\newcommand{\Roma}[1]{\uppercase\expandafter{\romannumeral#1}}
\newcommand{\fl}[1]{\mathcal{#1}}
\renewcommand{\k}[3]{\mathopen{}\left#1 #2 \right#3}
\newcommand{\al}{\alpha}    
\newcommand{\de}{\delta}    
  \newcommand{\ep}{\varepsilon}
\newcommand{\pa}{\partial}
\newcommand{\beeq}{\begin{equation}}\newcommand{\eneq}{\end{equation}}
\title
[semilinear wave equations with mixed nonlinear terms]
{Global existence and lifespan for semilinear wave equations with mixed nonlinear terms}
\author{Wei Dai}
\address{School of Mathematical Sciences\\ Zhejiang University\\ Hangzhou 310027,P.R.China}
\email{daiw16@zju.edu.cn}
\author{Daoyuan Fang}
\address{School of Mathematical Sciences\\ Zhejiang University\\ Hangzhou 310027,P.R.China}
\email{dyf@zju.edu.cn}
\author{Chengbo Wang}
\address{School of Mathematical Sciences\\ Zhejiang University\\ Hangzhou 310027,P.R.China}
\email{wangcbo@zju.edu.cn}
\urladdr{http://www.math.zju.edu.cn/wang}
\date{\today}
\begin{document}

\bibliographystyle{plain} 

\begin{abstract}
Firstly, we study the equation $\square u = |u|^{q_c}+ |\partial u|^p$ with small data, where $q_c$ is the critical power of \emph{Strauss} conjecture and $p\geq q_c.$ We obtain the optimal estimate of the lifespan  $\ln\k({T_\varepsilon})\approx\varepsilon^{-q_c(q_c-1)}$ in $n=3$, and improve the lower bound of $T_\varepsilon$ from $\exp\k({c\varepsilon^{-(q_c-1)}})$ to $\exp\k({c\varepsilon^{-(q_c-1)^2/2}})$  in $n=2$. Then, we study the \emph{Cauchy} problem with small initial data for a system of semilinear wave equations $\square u = |v|^q,$ $ \square v = |\partial_t u|^p$ in 3-dimensional space with $q<2$. We obtain that this system admits a global solution above a $p-q$ curve for spherically symmetric data. On the contrary, we get a new region where the solution will blow up.
\end{abstract}

\keywords{Strauss conjecture; Glassey conjecture; generalized Strichartz estimate; Klainerman-Sobolev inequalities}

\subjclass[2010]{35L05, 35L15, 35L70}

\maketitle

\section{Introduction}
In this paper, we want to study the global solvability and the blow up for some semilinear wave equations with nonlinear terms like $|u|^q$ and $|\partial_t u|^p$. Firstly we study the lifespan of the equation with mixed nonlinear terms
\begin{equation}\label{e1.1}
\begin{cases}
\square u\dyw \partial_t^2u-\Delta u=|\partial_t u|^p+|u|^{q}, \\
(u, \partial_tu)|_{t=0}=\varepsilon\big(f(x), g(x)\big),
\end{cases}
\end{equation}
where $p>1,$ $q>1$ and $x\in \R^n.$ 
This equation is in relation (\cite{MR3552253}) with the following equations which are well-investigated:
\begin{gather}
\square v=|v|^q,~t>0,~x\in\R^n,\label{e1.2}\\
\square w=|\partial_t w|^p,~t>0,~x\in\R^n.\label{e1.3}
\end{gather}
The first equation \cref{e1.2} is related to the \emph{Strauss} conjecture, for which the critical power, denoted by $q_c(n)$, is known to be the positive root of the quadratic equation
\begin{equation*}
(n-1)q^2-(n+1)q-2=0\ .
\end{equation*}
This conjecture was finally verified in \cite{MR1481816,MR2195336}. And the complete history can be found in \cite{MR3013062}. As for the other equation \cref{e1.3}, which is related to the \emph{Glassey} conjecture, see \cite{MR2980460} and the references therein for more information.

The global existence and blow up dichotomy for the equation \cref{e1.1}  with spatial dimension $n=2, 3$ has been well understood, through the works \cite{MR3552253, Wei2014Blow}.
For the cases where there is no global existence, it is also interesting to give sharp estimates of the lifespan, $T_\ep$, from above and below. 
In  \cite{MR3552253}, the sharp estimates of $T_\ep$ has been obtained for 
any $p, q\ge 2$ and $q>2/(n-1)$, except for the critical case $p\geq q=q_c$. More precisely, it is known that 
\begin{equation*}
\exp\k({c\varepsilon^{-(q_c-1)}})\leq T_\varepsilon\leq\exp\k({C\varepsilon^{-q_c(q_c-1)}}), \  p\geq q=q_c,\ n=2, 3.
\end{equation*}
The lower bound was obtained in \cite{MR3552253} by using a variant of \emph{Klainerman's} method of commuting vector fields, and the upper bound comes from the discussion of \emph{Strauss} conjecture and a simple application of comparison principle, which is expected to be sharp for this problem.

It is not difficult to find that  the lower bound of the lifespan of the critical problem of \cref{e1.2} 
 is closely related to the power $q$ of $L_t^q$ in time norm for the forcing term in the key estimates. For example, the obtained bound $\exp\k({c\varepsilon^{-q_c(q_c-1)}})$ which comes from \cite{MR1408499}  coincides with $q=q_c$ in estimates \cref{e3.1}, and the obtained bound $\exp\k({c\varepsilon^{-(q_c-1)^2/2}})$ which comes from \cite{MR2911108}  coincides with $q=(q_c-1)/2$ in estimates \cref{e3.2}. In order to improve the result from \cite{MR3552253}, we adapt these generalized \emph{Strichartz} estimates to the equation \cref{e1.1}, use energy inequality with \emph{Klainerman-Sobolev} inequality to deal with derivative term. Thus we get the following result
for dimension three, which is sharp in general.
\begin{thm}\label{t1.1}
Let $n=3$, $q=q_c(3)=1+\sqrt{2}$ and $p\geq q$. Suppose that the data $(f, g)$ satisfy
\begin{equation}\label{e1.4}
\Lambda \dyw   \|\k<x>^5\partial_x^{\leq 3}f\|_{L_x^\infty}+\|\k<x>^5\partial_x^{\leq 2}g\|_{L_x^\infty}<\infty,
\end{equation}
then there exists an $\varepsilon_0(\Lambda,  p)>0$ and a constant $c>0$, such that for any $\varepsilon\in(0, \varepsilon_0), $ \cref{e1.1} has a unique solution $u\in C^0\k({\k[{0, \bar T_\varepsilon}];H^3(\R^3)})\cap C^1\k({\k[{0, \bar T_\varepsilon}];H^2(\R^3)})$ where
 \begin{equation*}
\bar T_\varepsilon\dyw
\exp\k({c\varepsilon^{-q_c(q_c-1)}}).
\end{equation*}
\end{thm}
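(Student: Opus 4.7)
My plan is to run a continuity argument in a solution space carrying two complementary norms, tuned respectively to the two nonlinearities. Let $Z\in\{\partial_\alpha,\Omega_{ij}\}$ be the generators of translations and rotations, both of which commute with $\square$. The regularity hypothesis $\Lambda<\infty$ lets me close the bootstrap at $|Z|^{\leq 2}$. Define
\begin{equation*}
X(T):=\sup_{0\leq t\leq T}\sum_{|\alpha|\leq 2}\|\partial Z^\alpha u(t)\|_{L^2_x} + \|u\|_{L^{q_c}_t L^{q_c}_x([0,T]\times\R^3;\,w)},
\end{equation*}
where the second piece is the weighted critical-exponent Strichartz norm on which the estimate labelled \cref{e3.1} holds with $q=q_c$ in time. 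The free-solution contribution to $X(T)$ is $O(\Lambda\varepsilon)$, directly from \cref{e1.4}.

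For the power term $|u|^{q_c}$, I apply the generalized Strichartz estimate \cref{e3.1} to $\square Z^{\leq 2}u$; distributing derivatives and using H\"older, the right-hand side is dominated by $\|u\|_{L^{q_c}_tL^{q_c}_x(w)}^{q_c-1}\cdot\|Z^{\leq 2}u\|_{L^{q_c}_tL^{q_c}_x(w)}$, and closes up to a factor $(\log T)^{1/q_c}$ arising from saturating the critical Lebesgue index --- this is exactly the mechanism behind the $n=3$ Strauss-critical lifespan of \cite{MR1408499}. For the derivative term $|\partial_t u|^p$, I use the standard vector-field energy estimate on $\square Z^\alpha u$, so matters reduce to bounding $\|Z^{\leq 2}(|\partial_t u|^p)\|_{L^1_tL^2_x}$. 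Writing this as $|\partial_t u|^{p-1}|Z^{\leq 2}\partial_t u|$ and invoking Klainerman--Sobolev
\begin{equation*}
|\partial u(t,x)|\les \langle t+|x|\rangle^{-1}\langle t-|x|\rangle^{-1/2}\sum_{|\alpha|\leq 2}\|\partial Z^\alpha u(t)\|_{L^2_x},
\end{equation*}
the prefactor $\langle t+|x|\rangle^{-(p-1)}\langle t-|x|\rangle^{-(p-1)/2}$ is uniformly $L^1_t$-integrable because $p\geq q_c>2$, so this nonlinearity contributes only $O(X(T)^p)$ with no $T$-growth.

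Combining, I obtain the bootstrap inequality
\begin{equation*}
X(T)\leq C\Lambda\varepsilon + C(\log T)^{1/q_c}X(T)^{q_c} + CX(T)^p,
\end{equation*}
so $X(T)\les\varepsilon$ persists as long as $(\log T)^{1/q_c}\varepsilon^{q_c-1}$ is sufficiently small, i.e. $T\leq\exp(c\varepsilon^{-q_c(q_c-1)})$, which identifies $\bar T_\varepsilon$. Existence and uniqueness in the stated regularity class then follow from a standard Picard iteration on this a priori bound. The delicate step I expect is the compatibility of the two functional frameworks: the weights encoded in \cref{e3.1} live in Strauss-type geometry while Klainerman--Sobolev produces null-frame decay, and one must verify these mesh when each nonlinearity is estimated in the norm not natively adapted to it (in particular, one must ensure the Strichartz-side norm of $|\partial_t u|^p$ is also controlled by $X(T)$). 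One must also track $Z^{\leq 2}$ falling on $|\partial_t u|^p$ for non-integer $p$, which is acceptable here since $p\geq q_c>2$ supplies enough differentiability of $s\mapsto |s|^p$.
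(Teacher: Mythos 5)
Your overall strategy coincides with the paper's: generalized Strichartz (the $L_t^{q_c}$ forcing norm in \eqref{e3.1}) for $|u|^{q_c}$, energy plus Klainerman--Sobolev for $|\partial_t u|^p$, and a bootstrap in which the $(\log T)^{1/q_c}\varepsilon^{q_c-1}$ smallness condition produces exactly $\bar T_\varepsilon=\exp(c\varepsilon^{-q_c(q_c-1)})$. But there is one genuine gap: you commute only with $Z\in\{\partial_\alpha,\Omega_{ij}\}$ and then invoke the decay $|\partial u(t,x)|\lesssim \langle t+|x|\rangle^{-1}\langle t-|x|\rangle^{-1/2}\sum_{|\alpha|\le 2}\|\partial Z^\alpha u(t)\|_{L^2}$. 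That inequality is false for this reduced family: the global Sobolev inequality \eqref{e3.3} with the $\langle t+r\rangle^{(n-1)/2}\langle t-r\rangle^{1/2}$ weight requires the full Klainerman algebra $\Gamma$, including the Lorentz boosts $\Omega_{0k}$ and the scaling field $L_0$. With translations and rotations alone you obtain at best $\langle r\rangle^{-1}$ decay and nothing in $t$ (in particular no decay near $x=0$), so $\|\partial u(t)\|_{L^\infty_x}^{p-1}$ is not integrable in $t$ and neither the energy estimate of $\||\partial_t u|^p\|_{L^1_tL^2_x}$ nor the Strichartz-side bound of $|\partial_t u|^p$ closes. This is not merely notational, since you justify the restricted family by commutation with $\square$ --- but the boosts commute with $\square$ as well and $L_0$ does so up to a constant multiple of $\square$ (see \eqref{e2.1}), so there is no obstruction to using them; the weight $\langle x\rangle^5$ in \eqref{e1.4} is present precisely so that the data norms of $\partial\Gamma^{\le 2}u(0)$ (which involve weighted spatial derivatives at $t=0$) are $O(\Lambda\varepsilon)$. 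Replacing $Z$ by $\Gamma$ throughout, and checking the initial-data norms as in Claim \ref{c3.7}, repairs the argument and brings it into line with the paper's proof.

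A smaller inaccuracy: \eqref{e3.1} is not a space-time $L^{q_c}_{t,x}$ bound but a fixed-time estimate of $T^{1/q_c^2}\|u(T,\cdot)\|_{\fl{L}_r^{q_c}L_\omega^3}$ with the forcing split into $\|F\|_{L_t^{q_c}\fl{L}_r^1L_\omega^{3/2}}$ on $t<T/4$ and a weighted $L_t^\infty$ piece on $T/4<t<T$; your bootstrap quantity should accordingly be $\sup_T \langle T\rangle^{1/q_c^2}\|\Gamma^{\le 2}u(T)\|_{\fl{L}_r^{q_c}L_\omega^3}$ rather than a genuine space-time norm, and the product estimate for $\Gamma^{\le 2}|u|^{q_c}$ then goes through $\fl{L}_r^1L_\omega^{3/2}$ via H\"older in $r$ and Sobolev on the sphere. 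This does not change the mechanism you describe for the logarithm, only its bookkeeping.
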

Turning to the $2$ dimensional case,
as is typical for wave equations, the problem seems to be more delicate.
As far as the authors are aware, even for the
problem \cref{e1.2} with $q=q_c(2)$,
the only available approach to prove the sharp lower bound is given in
\cite{MR1233659}, for compactly supported smooth data, which relies heavily on the fundamental solutions and seems inappropriate for the problems involving nonlinear term $|\pa_t u|^p$.
Instead, the approach using space-time estimates is more robust for various nonlinear problems.
Here, we adapt the generalized \emph{Strichartz} estimates 
from \cite{MR2911108} to the current setting and obtain the following
\begin{thm}\label{t1.2} 
Let $n=2$, $q=q_c(2)=(3+\sqrt{17})/2$ and $p\geq q$. Considering
\cref{e1.1} with data $(f, g)$ satisfying \eqref{e1.4},
we have a similar existence result with small data, as in Theorem \ref{t1.1}, with $\bar T_\varepsilon$ replaced by
 \begin{equation*}
\tilde T_\varepsilon\dyw\exp\k({c\varepsilon^{-(q_c-1)^2/2}}).
\end{equation*}
\end{thm}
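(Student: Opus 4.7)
The plan is to mirror the 3-d argument of \Cref{t1.1}, but with the 3-d generalized Strichartz estimates replaced by their 2-d counterparts from \cite{MR2911108} (the ones referenced as \cref{e3.2} in the introduction). I would solve \eqref{e1.1} via a standard Picard iteration $\square u_k = |u_{k-1}|^{q_c}+|\partial_t u_{k-1}|^p$ with data $\varepsilon(f,g)$ on a time slab $[0,T]$ with $T$ to be determined, and close the iteration in a norm $X_T$ adapted to the two nonlinear terms.

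The norm $X_T$ would consist of two parts. The first is the weighted mixed-Lebesgue norm in which the 2-d generalized Strichartz estimate of \cite{MR2911108} is formulated; since its time integrability exponent is $(q_c-1)/2$, a standard time-slab argument at the critical power produces a $(\log T)^{2/(q_c-1)}$ loss, and balancing this against $\varepsilon^{-(q_c-1)}$ yields exactly $T=\exp(c\varepsilon^{-(q_c-1)^2/2})=\tilde T_\varepsilon$. The second part is a vector-field energy $\sum_{|a|\leq 2}\|\Gamma^a\partial u(t)\|_{L^2_x}$, where $\Gamma$ ranges over the Killing and scaling fields on $\mathbb{R}^{1+2}$ that commute well with $\square$; the 2-d Klainerman--Sobolev inequality converts this $L^2$ bound into the pointwise decay $|\partial_t u(t,x)|\lesssim \langle t+r\rangle^{-1/2}\langle t-r\rangle^{-1/2}\|u\|_{X_T}$, which under $p\geq q_c(2)$ is integrable enough in time to absorb the $|\partial_t u|^p$ contribution to the energy inequality with only a mild logarithmic loss. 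The hypothesis \eqref{e1.4} controls the linear part, $\|\varepsilon u^L\|_{X_T}\lesssim \varepsilon\Lambda$.

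Putting the two ingredients together, the $|u|^{q_c}$ term feeds back $\varepsilon^{q_c-1}(\log T)^{2/(q_c-1)}\|u\|_{X_T}$ into the Strichartz part of $X_T$, while the $|\partial_t u|^p$ term feeds back $\varepsilon^{p-1}$ times a slowly growing time factor into the energy part. Both feedbacks are strictly less than $\tfrac12$ provided $T\leq \tilde T_\varepsilon$ and $\varepsilon$ is small enough, so the Picard iterates stay in a fixed ball of $X_T$ and contract to the desired solution. The main technical obstacle I anticipate is the compatibility of the two halves of the norm: the 2-d pointwise decay $\langle t+r\rangle^{-1/2}\langle t-r\rangle^{-1/2}$ of $\partial_t u$ is borderline, and one must verify that the weights in the 2-d generalized Strichartz estimate can be chosen so that the $|\partial_t u|^p$ contribution, when measured in the Strichartz dual norm, does not destroy the $(\log T)^{2/(q_c-1)}$ balance governing the $|u|^{q_c}$ term. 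The assumption $p\geq q_c(2)$ ought to be exactly the threshold at which this compatibility holds, and tracking this borderline is precisely where the 2-d analysis becomes more delicate than its 3-d counterpart.
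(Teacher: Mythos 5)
Your proposal follows essentially the same route as the paper: the 2-d generalized Strichartz estimate of \cite{MR2911108} with its $L_t^{(q_c-1)/2}$ dual norm and logarithmic loss for the $|u|^{q_c}$ term, a vector-field energy combined with the Klainerman--Sobolev decay $\langle t\rangle^{-1/2}$ for the $|\partial_t u|^p$ term, and a contraction/bootstrap closing under exactly the balance $\ln T\approx\varepsilon^{-(q_c-1)^2/2}$; the borderline you flag (that $p\geq q_c(2)$ makes the $|\partial_t u|^p$ contribution to the Strichartz forcing norm exactly log-critical, $(1-q_c/2)\cdot\frac{q_c-1}{2}=-1$) is precisely the point the paper verifies. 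The only cosmetic differences are that the paper runs a continuity argument on the two quantities $A(T),B(T)$ rather than a Picard iteration, and normalizes the Strichartz component at size $\varepsilon^{1/q_c}$ because the data term in the linear estimate also carries the logarithmic factor.
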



\par Next, we consider a coupled wave system with different nonlinear terms in each equation,
\begin{equation}\label{e1.5}
\begin{cases}
\square u=|v|^q, \qquad\square v=|\partial_tu|^p,\\
(u, \partial_tu)|_{t=0}=\varepsilon\big(f(x), g(x)\big),~(v, \partial_tv)|_{t=0}=\varepsilon\big(\tilde f(x), \tilde g(x)\big),
\end{cases}
\end{equation}
where $u, v$ depend on $(t, x)\in[0, T)\times\R^3$ for some $T\in(0, \infty],$ $\varepsilon$ is positive and small enough.

 The system \cref{e1.5} has been discussed in \cite{MR3485160}, which shows there exists a curve in $(1, \infty)^2$ of index-pairs $(p, q),$
\begin{equation}\label{e1.6}
(p-1)(pq-1)=p+2, \qquad 1<q, \qquad 1<p<3.
\end{equation}
When $(p, q)$ lies below the curve, this system blow up in most cases whatever small $\varepsilon$ is. On the contrary, when $(p, q)$ lies above the curve with $2<p<3,$ $2<q,$ this system \cref{e1.5} has a global solution at least for radially symmetric small data.
\par To analyze this equation, we compare it with some closely related systems. They are
\begin{equation*}
\begin{aligned}
\text{(\Roma1)}:\qquad  &\square u=|v|^q, \qquad&&\square v=|u|^p, \qquad&&x\in \R^3;\\
\text{(\Roma2)}:\qquad  &\square u=|\partial_tv|^q, \qquad&&\square v=|\partial_tu|^p, \qquad&&x\in \R^3.
\end{aligned}
\end{equation*}
\par For (\Roma1), which relates to the \emph{Strauss} conjecture, it is known that
\begin{equation}\label{e1.7}
\max\k\{{\frac{p+2+q^{-1}}{pq-1}, \frac{q+2+p^{-1}}{pq-1}}\}=1
\end{equation}
is the critical curve of index-pairs $(p, q)$. The curve was firstly provided in \cite{MR2033494}, in which they prove global existence for supercritical case and blow up for subcritical case. For blow up in the critical case and the lifespan estimates, see \cite{MR1785116} and the references therein.

As for (\Roma2), which relates to the \emph{Glassey} conjecture, such a curve is
 \begin{equation}\label{e1.8} 
\max\k\{{\frac{q+1}{pq-1}, \frac{p+1}{pq-1}}\}=1.
\end{equation}
This is optimal at least for radially symmetric initial data, where the blow up part can be found in \cite{MR1733070} and the existence part for symmetric situation was verified in \cite{MR2221121}. We refer the interested readers to \cite{MR3485160} for more information about these two problems.
\par It is naturally to infer that the critical curve for \cref{e1.5} should lies between the curves \cref{e1.7} and \cref{e1.8}. However, the curve \cref{e1.6} intersects with one of the above curves (see the figure below, $CC''$ intersects with $l_B$). This motivates us to improve the result when $q<2.$

\begin{figure}[H]
\centering
\includegraphics[width=0.7\textwidth]{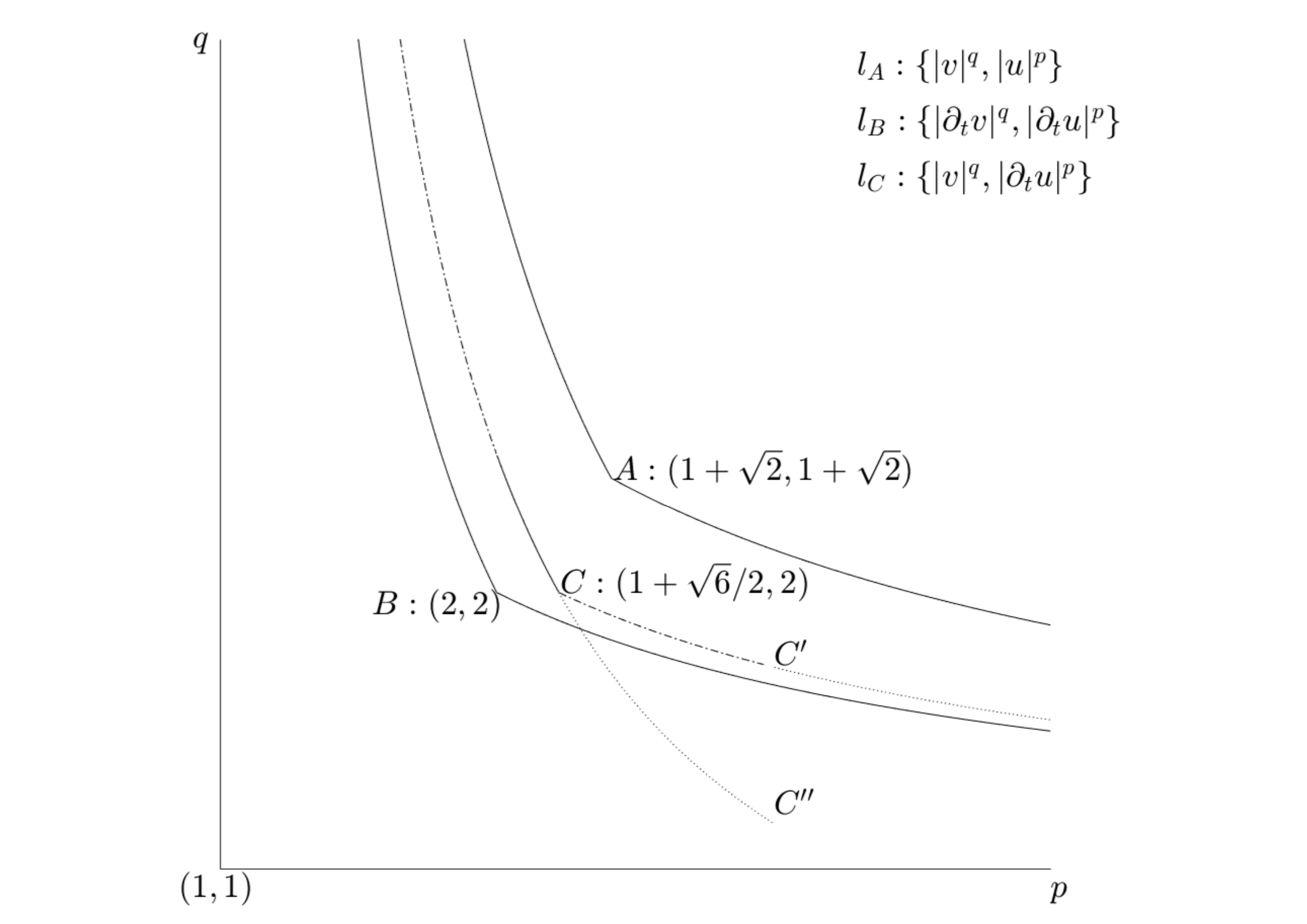}
\end{figure}
Here $l_A$ is the critical curve to problem (\Roma1), $l_B$ is the critical curve to problem (\Roma2) and $CC''$ is \cref{e1.6} with $q<2.$
We want to establish a global existence theorem to \cref{e1.5} with radially symmetric small data, for the region above $CC',$
\begin{equation}\label{e1.9}
p(q-1)>2+\frac{1}{pq}, \qquad 1<q<2, \qquad 2<p<3.
\end{equation}

\begin{thm}\label{t1.3}
Suppose that \cref{e1.9} is satisfied. We also suppose $f, \tilde f\in C^2(\R^3),$ $g, \tilde g\in C^1(\R^3)$ are spherically symmetric and supported in $B_1(0).$ Then there exists a positive number $\varepsilon_0$ such that for $0 <\varepsilon<\varepsilon_0,$ there exists a global solution $(u, v)$ of \cref{e1.5} satisfying
\begin{equation*}
u, v, \partial_t u\in C(\R^+\times \R^3).
\end{equation*}
\end{thm}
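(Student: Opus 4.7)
The plan is to exploit the radial symmetry in $\R^3$ to reduce \cref{e1.5} to a one-dimensional problem via the standard substitution $U(t,r)\dyw ru(t,r)$, $V(t,r)\dyw rv(t,r)$, which satisfy $(\pt^2-\pa_r^2)U=r|v|^q$ and $(\pt^2-\pa_r^2)V=r|\pt u|^p$ with odd boundary conditions at $r=0$. D'Alembert's formula then furnishes explicit Duhamel representations for $u$, $\pt u$ and $v$. Since the data are compactly supported in $B_1(0)$, finite propagation confines the free parts to the strip $|t-r|\le 1$, so the essential task reduces to a pointwise bound on the Duhamel integrals.

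I would set up a fixed-point iteration in weighted $L^\infty$ spaces of Asakura--Kubota--Agemi type. Concretely, introduce
\[
\|u\|_X\dyw\sup_{t,r\ge 0}\k({1+t+r})\k({1+|t-r|})^{\al}|\pt u(t,r)|,\qquad \|v\|_Y\dyw\sup_{t,r\ge 0}\k({1+t+r})\k({1+|t-r|})^{\be}|v(t,r)|,
\]
with parameters $\al,\be>0$ to be fixed below. The analytic engine is a weighted pointwise estimate for the radial $3$D wave equation: if $\square w=F$ has zero data and $|F(s,y)|\les K\k({1+s+|y|})^{-a}\k({1+|s-|y||})^{-b}$, then evaluating $\tfrac{1}{r}\int_0^t\!\int_{|r-(t-s)|}^{r+(t-s)}\la|F|\,d\la\,ds$ in light-cone coordinates produces a bound of the form $\k({1+t+r})\k({1+|t-r|})^{\mu}|w|\les K$, together with a companion bound on $|\pt w|$ obtained by differentiating the representation in $t$, with $\mu=\mu(a,b)$ dictated by the standard structural inequalities governing such integrals.

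Feeding the ansatz back into \cref{e1.5}, the forcing $|v|^q$ has weighted decay $(1+t+r)^{-q}(1+|t-r|)^{-q\be}$ and by the weighted estimate produces a bound on $\pt u$ that must be consistent with $\|\cdot\|_X$; dually, $|\pt u|^p$ has weighted decay $(1+t+r)^{-p}(1+|t-r|)^{-p\al}$ and produces a bound on $v$ that must be consistent with $\|\cdot\|_Y$. A direct computation shows that admissible $\al,\be>0$ exist closing both loops precisely when $p(q-1)>2+1/(pq)$, so \cref{e1.9} is exactly the compatibility condition of the coupled weight system. Once $\al,\be$ are fixed, a Picard iteration on a small closed ball in $X\times Y$ converges, and the continuity $u,v,\pt u\in C(\R^+\times\R^3)$ follows from uniform convergence of the iterates together with the regularity of the free parts.

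The main obstacle, in my view, is the borderline nature of the regime $q<2$, which lies below the classical Strauss window for the $u$-equation. The forcing $|v|^q$ has only the modest decay $(1+t+r)^{-q}$ in the radial direction, and one must extract the needed $(1+t+r)^{-1}$ decay for $\pt u$ entirely from the transverse factor $(1+|t-r|)^{-q\be}$ by pushing $\be$ (and correspondingly $\al$) close to their critical values, leaving only a narrow admissible window. A secondary technical point is that $X$ controls $\pt u$ rather than $u$, so the Duhamel identity must be differentiated in $t$, producing light-cone boundary terms whose estimation is what ultimately ties the Glassey-type restriction $p<3$ to the admissible range.
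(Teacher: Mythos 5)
Your overall strategy --- radial reduction to a one--dimensional integral system, weighted $L^\infty$ norms, and a Picard/contraction argument --- is exactly the paper's framework, but the specific norms you propose cannot close, and your setup omits an ingredient the paper needs precisely because $q<2$.

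First, the weights. You posit $|\pt u|\les (1+t+r)^{-1}(1+|t-r|)^{-\al}$ and $|v|\les(1+t+r)^{-1}(1+|t-r|)^{-\be}$ uniformly. But with a source $|v|^q$ whose $(1+s+\rho)$--decay rate is $q<2$, the Duhamel integral in light--cone coordinates contains $\int_{|t-r|}^{t+r}\k<\tau>^{1-q}\,d\tau$, which \emph{diverges} like $\k<{t+r}>^{2-q}$ rather than converging; near the light cone this yields only $|\pt u|\les \k<{t+r}>^{-(q-1)}\k<{t-r}>^{-\mu/p}$, not $\k<{t+r}>^{-1}$. This is why the paper's weights $\omega_1,\omega_2$ are defined piecewise on $\{r<t/2\}$ and $\{r\ge t/2\}$ and are genuinely anisotropic: e.g.\ $\omega_2=\k<r>^{p-2}\k<{t+r}>^{3-p+\mu/pq}$ in the interior, so $v$ is not required to decay like $(1+t+r)^{-1}$ there either. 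Your claim that admissible $\al,\be>0$ exist ``precisely when'' \cref{e1.9} holds is therefore not a computation you could actually carry out with the norms as written; the compatibility condition \cref{e1.9} emerges in the paper from the interplay of these region--dependent exponents (via the constraint \cref{e4.4} on the auxiliary parameter $\mu<1$).

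Second, and more fundamentally, your space $X\times Y$ controls only $\pt u$ and $v$. The representation of $\pt u$ is $\varepsilon\pt u_o+r^{-1}K_+|v|^q$ with $K_+F=\frac12\int_0^t[(r+t-s)F(s,r+t-s)+(r-t+s)F(s,r-t+s)]\,ds$; as $r\to0$ the two terms nearly cancel and the $1/r$ prefactor can only be absorbed by differentiating $\rho\mapsto\rho|v(s,\rho)|^q$ in $\rho$ (mean value theorem). This forces one to control $r\pa_r v$ as part of the norm, which in turn requires a third integral identity $\pa_r(rv)=\varepsilon\pa_r(rv_o)+K_-|w|^p$ and a third weight $\omega_3$. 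Your proposal never produces an estimate for $\pa_r v$, so the fixed--point map is not bounded on your space near $r=0$. (The ``light--cone boundary terms from differentiating in $t$'' you flag are not the real obstruction; the $r\to0$ degeneracy is.) These are the two places where the argument as proposed genuinely fails and where the paper's choices differ from yours.
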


The key point here is that a symmetric $3$-D wave equation is equivalent to an $1$-D equation essentially, in which case the solution of linear problem has a higher regularity. In order to match the situation $q<2$, we use the weight functions different from \cite{MR3485160}.

As for the blow up part, we adopt the strategy of deriving a system of ordinary differential  inequalities which causes blow up solutions. Since the technique is suitable for any dimensions, we give a general result rather than $n=3$.

\begin{thm}\label{t1.4}
Let $n\ge 1$ and $p, q>1$ such that \begin{equation}\label{e1.10}
\frac{p+1}{pq-1}>\frac{n-1}{2}, \qquad 
\frac{n-1}{2}(q-1)<1
\ .
\end{equation}
Suppose that all data are supported in $B_1(0)$ and $g-f, f, \tilde g-\tilde f, \tilde f$ are non-negative where $\tilde f$ does not vanish identically. Then for any $\varepsilon>0,$ there are no global weak solutions 
of \cref{e1.5} such that  $\supp (u,v)\subset\{(t, x):|x|\leq t+1\}$, and
\begin{equation}\label{eq-1.11}
u, v, \partial_t u, \partial_t v\in C (\R_+;L^1(\R^n) ), 
v\in C(\R_+; L^q(\R^n)),  \partial_t u\in C(\R_+;L^p(\R^n)),
\end{equation}
where $\R_+=[0,\infty)$.
\end{thm}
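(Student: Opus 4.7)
The plan is to argue by contradiction: assuming a global weak solution $(u,v)$ in the class \eqref{eq-1.11} exists, I would build an iterated lower bound for an integral of $v$ that must diverge at some finite time, following the Kato/Sideris-type functional method. Introduce
\begin{equation*}
F(t)=\int_{\R^n} u(t,x)\,dx,\quad G(t)=\int_{\R^n}v(t,x)\,dx,\quad F_1(t)=F'(t)=\int_{\R^n}\pa_t u\,dx.
\end{equation*}
Since $\supp(u,v)\subset\{|x|\le t+1\}$, all integrals are over $B_{t+1}(0)$ (volume $\lesssim(1+t)^n$) and $\int\Delta u\,dx=0$ by the divergence theorem. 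The equations then give $F''(t)=\int|v|^q\,dx$ and $G''(t)=\int|\pa_t u|^p\,dx$, and H\"older on $B_{t+1}$ yields
\begin{equation*}
F''(t)\gtrsim(1+t)^{-n(q-1)}|G(t)|^q,\qquad G''(t)\gtrsim(1+t)^{-n(p-1)}|F_1(t)|^p.
\end{equation*}

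Next, the positivity hypotheses initialize the iteration. From $G''\ge 0$, $G(0)=\ep\int\tilde f>0$ and $G'(0)=\ep\int\tilde g\ge\ep\int\tilde f>0$ I get $G(t)\ge c_0\ep(1+t)$ for all $t\ge 0$. Plugging this into the inequality for $F''$, integrating in $t$ (which is valid because the hypothesis $\frac{n-1}{2}(q-1)<1$ makes the exponent $n-q(n-1)>-1$ integrable), I obtain
\begin{equation*}
F_1(t)\gtrsim\ep^q(1+t)^{n+1-q(n-1)},\qquad t\ge T_0.
\end{equation*}
Substituting this into the inequality for $G''$ and integrating twice gives an improved power-type lower bound $G(t)\ge A_1(1+t)^{a_1}$, which is then fed back into the scheme.

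Iterating the two-step procedure produces a sequence $G(t)\ge A_k(1+t)^{a_k}$ where the exponents obey a linear recursion $a_{k+1}=pq\,a_k+c_0$ (with $c_0$ an explicit constant in $n,p,q$) and the coefficients obey $A_{k+1}\ge c\,A_k^{pq}/P(a_k)$ with $P$ polynomial. Setting $M_k(t)=\log A_k+a_k\log(1+t)$ and using $pq>1$, a Kato-type summation shows that $M_k(t)/(pq)^k$ converges, whence $M_k(t_*)\to+\infty$ as $k\to\infty$ for any $t_*$ with $\log(1+t_*)\gtrsim\ep^{-1}$. This forces $G(t_*)=+\infty$ at the finite time $t_*$, contradicting \eqref{eq-1.11}.

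The main obstacle is to obtain the \emph{sharp} threshold $\frac{p+1}{pq-1}>\frac{n-1}{2}$, rather than the weaker exponent constraint one would get from a naive run of the iteration. Achieving this requires careful bookkeeping of how the polynomial factor $P(a_k)$ combines with the double-exponential growth $A_k^{pq}$, together with a judicious choice of intermediate times $T_k$ so that the Kato-type summation converges at precisely the sharp rate. The extra positivity assumptions $g\ge f$ and $\tilde g\ge\tilde f$ (beyond mere non-negativity) come in here, ensuring non-negativity and monotonicity of the auxiliary quantities $F_1$ and $G'$ used in the refined bookkeeping.
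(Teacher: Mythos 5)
There is a genuine gap, and it is not the ``bookkeeping'' issue you flag at the end --- it is the choice of test function at the very first step. You take the plain averages $F(t)=\int u\,dx$, $G(t)=\int v\,dx$ and apply H\"older over the whole ball $B_{t+1}$, which has volume $\approx(1+t)^n$; this produces the losses $(1+t)^{-n(q-1)}$ and $(1+t)^{-n(p-1)}$ in your two differential inequalities. The exponent recursion $a_{k+1}=pq\,a_k+c_0$ then has its additive constant $c_0$ determined by these losses, and whether the iteration diverges (hence whether you get blow-up, and in what region of $(p,q)$) is decided entirely by the sign structure of that constant. With the loss $n(q-1)$, $n(p-1)$ you provably land in a strictly smaller blow-up region than the claimed $\frac{p+1}{pq-1}>\frac{n-1}{2}$; no choice of intermediate times $T_k$ or careful tracking of the polynomial factors $P(a_k)$ can change the exponents in the recursion. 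The paper's proof instead uses the Yordanov--Zhang weight $\psi(t,x)=e^{-t}\phi(x)$ with $\phi(x)=\int_{S^{n-1}}e^{x\cdot\omega}\,d\omega$, defining $F(t)=\int u\psi\,dx$, $G(t)=\int v\psi\,dx$. Since $\Delta\phi=\phi$, the function $\psi$ solves the wave equation, and the crucial estimate $\|\psi(t,\cdot)\|_{L^{s}(r<t+1)}\le C\langle t\rangle^{(n-1)(1/s-1/2)}$ (the weight concentrates near the light cone, where $\psi\approx\langle t\rangle^{-(n-1)/2}$) replaces the volume factor $(1+t)^{n/s}$ and yields the sharp losses $\langle t\rangle^{-\frac{n-1}{2}(q-1)}$ and $\langle t\rangle^{-\frac{n-1}{2}(p-1)}$. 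This is the idea your proposal is missing; the price is that the ODE system becomes $F''+2F'\ge\cdots$, $G''+2G'\ge\cdots$, handled with the multiplier $e^{2t}$.

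A secondary structural point: in the paper the iteration (Lemma \ref{l5.2}) only yields $G(t)\ge A\langle t\rangle^{M}$ for every $M$, with constants depending on $M$; it does not by itself produce finite-time blow-up. The contradiction comes from a separate ODE argument (Lemma \ref{l5.3}) which combines the two inequalities into $\frac{d}{dt}(G'+2G)^{p+1}\gtrsim\langle t\rangle^{-\frac{n-1}{2}(pq-1)}(G^{q+1}-G^{q+1}(0))^p$ and then uses the arbitrary polynomial growth from the iteration to absorb the time weight, arriving at $G'\ge cG^{1+\delta}$. Your plan to conclude directly from a Kato-type summation at a finite $t_*$ with $\log(1+t_*)\gtrsim\ep^{-1}$ would need the doubly exponential growth of $A_k$ to be verified explicitly, and in any case cannot succeed in the stated region without first fixing the test-function issue above.
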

 This improves the result from the curve $CC''$ to the right branch of $l_B$. Remark also that this result is better than that in \cite{aaa}.
 
\begin{remark} For $n=3$,
partly because the endpoint $(p,q)=(\sqrt{6}/2+1, 2)$ is critical,
but $(p,q)=(2,2)$ on $l_B$ is not,
 we tend to believe that the critical curve 
for \cref{e1.5} with $q\in (1,2)$ is
that appeared in Theorem \ref{t1.3}, that is,
$$p(q-1)=2+\frac{1}{pq}, \qquad 1<q<2, \qquad 1<p $$
which is $CC'$ in the figure. 
\end{remark}

This rest of paper is organized as follows. In Section \ref{s2} we introduce some notations we will use. Then we prove Theorem \ref{t1.1}-\ref{t1.4}, in Section \ref{s3}-\ref{s5}, respectively.

\section{notations}\label{s2}
We list here some notations which will be used in our article.
First, the \emph{Einstein} summation convention is used, as well as the convention
that Greek indices $\mu, \nu, \cdots$ range from $0$ to $n$ while Latin indices $i, j, \cdots$ will run from $1$ to $n.$
\par We also denote
\begin{alignat*}{2}
&\k<a>\dyw \sqrt{1+|a|^2}\approx1+|a|, \\
&\|f(x)\|_{\fl{L}_r^p L_\omega^b}\dyw\k({\int_0^\infty \k({\int_{S^{n-1}}|f(r\omega)|^b\d \omega})^{p/b}r^{n-1}\d r})^{1/p}, \\
&\partial u(t, x)\dyw \{\partial_\mu u\}=(\partial_t u, \partial_x u), \\
&\partial ^{\leq k}f\dyw\{\partial^{\alpha}f\}_{0\leq|\alpha|\leq k.}
\end{alignat*}

Furthermore, we will use some kinds of special vector fields. There are spatial rotation: $\Omega_{jk}=x^k\partial_j-x^j\partial_k,$ Lorentz boost: $\Omega_{0k}=t\partial_k+x^k\partial_t$ and scaling: $L_0=t\partial_t+x^i\partial_i.$ Set $\Gamma=\{\partial_\mu, \Omega_{\mu\nu}, L_0\}$ be the well-known \emph{Klainerman} vector fields. For such vector fields, we have
\begin{equation}\label{e2.1}
\begin{aligned}
&\k[{\partial_\mu, \Gamma_\alpha}]f=C_{\mu\alpha}^\beta \partial_\beta f, \qquad\k[{\Gamma_\alpha, \Gamma_\beta}]f=C_{\alpha\beta}^\sigma \Gamma_\sigma f, \\
&\k[{\Gamma_\alpha, \square}]f=C_\alpha\, \square f, \qquad\Big(\alpha, \beta=0, 1, \cdots, (n^2+3n+2)/2\Big)
\end{aligned}
\end{equation}
\noindent where $[X, Y]$ denotes the commutator $XY-YX.$  And all coefficients are belong to $C_b^\infty.$
\par We denote a constant $C$ which may change from line to line, but not depend on $\varepsilon, t$ or $x.$ And $A\lesssim B$ means $A\leq CB$ for some $C>0,$ $A\gtrsim B$ is similar, $A\approx B$ means $A\lesssim B\lesssim A.$

\section{Proof of Theorem \ref{t1.1} and Theorem \ref{t1.2}}\label{s3}

\subsection{Preliminaries}\

We firstly list some lemmas to be used later.
\begin{lmm}[Local existence]\label{l3.1}
When $n\leq 3.$ For the equation
\begin{equation*}
\begin{cases}
\partial_t^2 u-\Delta u=F(u, \partial u), \\
u(0, x)=f(x), u_t(0, x)=g(x).
\end{cases}
\end{equation*}
If $(f, g)\in H^3\times H^2$, $F\in C^2$ and $F(0, 0)=0$ then there is a $T>0, $ depending on the norm of the data, so that this Cauchy problem has a unique solution satisfying
\begin{equation*}
\|\partial^{\leq 3}u(t, \cdot)\|_{L_x^2}<\infty, \quad 0\leq t\leq T.
\end{equation*}
Also, if $T_*$ is the supremum over all such times $T,$ then either $T_*=\infty$ or
\begin{equation*}
|\partial^{\leq 3}u|\not\in L_t^\infty L_x^2(0\leq t<T_*).
\end{equation*}
\end{lmm}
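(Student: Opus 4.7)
I would use the standard Picard iteration / contraction mapping approach combined with the energy estimate for the linear wave equation and the Sobolev embedding $H^2(\R^n)\hookrightarrow L^\infty(\R^n)$, which is valid exactly because $n\leq 3$ (so that $2>n/2$).

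First, fix the target space
\[
X_T \dyw C([0,T];H^3(\R^n))\cap C^1([0,T];H^2(\R^n)),
\]
with $\|u\|_{X_T}\dyw\sup_{0\leq t\leq T}\|\pa^{\leq 3}u(t,\cdot)\|_{L^2_x}$. Starting from $u_0\equiv 0$, define $u_{k+1}$ as the unique solution of the linear wave equation $\square u_{k+1}=F(u_k,\pa u_k)$ with data $(f,g)$. Applying the standard energy identity to $\pa^{\leq 2} u_{k+1}$ yields
\[
\|u_{k+1}\|_{X_T}\lesssim \|f\|_{H^3}+\|g\|_{H^2}+\int_0^T\|\pa^{\leq 2}F(u_k,\pa u_k)(s,\cdot)\|_{L^2_x}\, ds.
\]
To estimate the nonlinearity, I would use $F\in C^2$ with $F(0,0)=0$ to get uniform bounds on $F,F',F''$ on any ball $\{|(y,z)|\leq M\}$; the chain rule then expands $\pa^{\leq 2}F(u,\pa u)$ into finitely many terms of the form $($coefficient depending on $(u,\pa u))\times($product of $\pa^{\leq 3}u$'s$)$ of total derivative degree at most $2$. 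By Sobolev, $\|(u,\pa u)\|_{L^\infty_x}\lesssim\|\pa^{\leq 3}u\|_{L^2_x}$, and the only genuinely quadratic term $F_{zz}(\pa^2 u)^2$ is controlled by $\|\pa^2 u\|_{L^4_x}^2\lesssim\|\pa^{\leq 3}u\|_{L^2_x}^2$ via $H^1\hookrightarrow L^4$ for $n\leq 3$. Hence on the closed ball $\{\|u\|_{X_T}\leq 2C_0\}$ with $C_0\approx\|f\|_{H^3}+\|g\|_{H^2}$, one has $\|\pa^{\leq 2}F(u,\pa u)\|_{L^2_x}\leq \Phi(C_0)$ with $\Phi$ continuous, and choosing $T=T(C_0)$ small enough makes the iteration map the ball into itself.

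For contraction I would work at \emph{one lower} order of regularity: setting $w_k\dyw u_{k+1}-u_k$, the difference satisfies $\square w_k=F(u_k,\pa u_k)-F(u_{k-1},\pa u_{k-1})$ with zero data, and a first-order energy estimate together with the $C^1$ bound on $F$ over the uniform ball gives a contraction in $C([0,T];H^1)\cap C^1([0,T];L^2)$ after possibly shrinking $T$ once more. This yields a strong limit $u$ at low regularity, and the uniform bound $\|u_k\|_{X_T}\leq 2C_0$ combined with weak-$\ast$ compactness and lower semicontinuity upgrades the limit to an element of $X_T$ solving the nonlinear equation; uniqueness follows from the same low-regularity contraction estimate.

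The continuation criterion is what must be pinned down at the end. If $T_*<\infty$ while $\|\pa^{\leq 3}u\|_{L^\infty([0,T_*);L^2_x)}<\infty$, then for every $t_0<T_*$ the datum $(u(t_0),\pa_t u(t_0))$ is uniformly bounded in $H^3\times H^2$ by some $M_*$, so the local existence time produced above depends only on $M_*$ and equals some fixed $\de>0$ independent of $t_0$. Choosing $t_0>T_*-\de$ extends $u$ past $T_*$, contradicting the definition of $T_*$. The main technical obstacle in the whole argument is the nonlinear estimate: verifying that $\pa^{\leq 2}F(u,\pa u)\in L^2$ under the minimal hypothesis $F\in C^2$ (rather than $C^\infty$) and $\pa^{\leq 3}u\in L^2$, and this is precisely where the endpoint Sobolev embeddings $H^2\hookrightarrow L^\infty$ and $H^1\hookrightarrow L^4$ available only for $n\leq 3$ are used in an essential way.
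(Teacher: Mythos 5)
Your proposal is correct and coincides with the standard textbook proof of this classical result, which the paper does not reproduce but merely cites as classical: Picard iteration closed by the order-two energy estimate, the embeddings $H^2(\R^n)\hookrightarrow L^\infty$ and $H^1(\R^n)\hookrightarrow L^4$ valid for $n\leq 3$ to control $\partial^{\leq 2}F(u,\partial u)$ in $L^2$, contraction at one lower order of regularity, and a uniform local existence time to run the continuation criterion. The only details left implicit are routine: the time derivatives appearing in $\partial^{\leq 3}u(0,\cdot)$ must be computed from the equation by compatibility, and since the lemma only asserts finiteness of $\|\partial^{\leq 3}u(t,\cdot)\|_{L_x^2}$ (not strong $C([0,T];H^3)$ continuity), your weak-$\ast$ compactness and lower semicontinuity step suffices as stated.
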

\begin{proof}
The result is classical, for a proof, see, e.g., Chapter 12 of \cite{MR2597943}.
\end{proof}

\begin{lmm}[Theorem 6.4 of \cite{MR1408499}]\label{l3.2}
When $n=3,$ suppose $u$ solves the linear equation
\begin{equation*}\begin{cases}
\square u(t, x)=F(t, x), \\
u(0, x)=f(x), u_t(0, x)=g(x)
\end{cases}\end{equation*}
in $[0, T_*)\times\R^3.$ Then there exists a constant $C,$ such that $\forall\, T<T_*$
\begin{equation}\label{e3.1}
\begin{aligned}
T^{1/q_c^2}\|u(T, \cdot)\|_{\fl{L}_r^{q_c}L_\omega^3}\leq& C\Big( \|r^{1-2/q_c}\partial_\omega^{\leq 1}f\|_{\fl{L}_r^{q_c}L_\omega^{3/2}}+ \|r^{1+1/q_c}\partial_\omega^{\leq 1}f\|_{\fl{L}_r^\infty L_\omega^{3/2}}\\
&+ \|r^{2-2/q_c}g\|_{\fl{L}_r^{q_c}L_\omega^{3/2}}+ \|r^{2+1/q_c}g\|_{\fl{L}_r^\infty L_\omega^{3/2}}\\
&+ \|F\|_{L_t^{q_c}\fl{L}_r^1 L_\omega^{3/2}(t<T/4)}+T^{1/q_c}\|F\|_{L_t^{\infty}\fl{L}_r^1 L_\omega^{3/2}(T/4<t<T)}\Big).
\end{aligned}
\end{equation}
\end{lmm}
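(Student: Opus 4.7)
The plan is to prove \cref{e3.1} by the approach of \cite{MR1408499}: combine a spherical harmonic decomposition with the classical $3D$-to-$1D$ reduction for the radial wave operator, then trade angular regularity for angular integrability via Sobolev embedding on $S^2$.

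First I would split $u=u_{\text{hom}}+u_{\text{inh}}$, where $u_{\text{hom}}$ solves the free Cauchy problem with data $(f,g)$ and $u_{\text{inh}}$ is the Duhamel contribution from $F$ with zero data. For $u_{\text{hom}}$ I would expand the data in spherical harmonics, $f=\sum_k f_k(r)Y_k(\omega)$ and $g=\sum_k g_k(r)Y_k(\omega)$. For each fixed $k$, the substitution $v_k(t,r)\dyw r\,u_k(t,r)$ reduces the associated radial wave equation to a $1D$ wave equation, up to the nonnegative centrifugal term $k(k+1)/r^2$, which is harmless for upper bounds. Hence $u_k$ has an explicit representation via d'Alembert's formula for $v_k$, and weighted $L^{q_c}_r$ estimates follow from the explicit integral kernel together with standard weighted Hardy-type inequalities on the half-line; the exponents $r^{1-2/q_c}, r^{1+1/q_c}, r^{2-2/q_c}, r^{2+1/q_c}$ are dictated by the critical scaling at $q=q_c$. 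Summing in $k$ via Minkowski's inequality and invoking the Sobolev embedding $W^{1,3/2}(S^2)\hookrightarrow L^6(S^2)\hookrightarrow L^3(S^2)$ converts the $L^{3/2}_\omega$-type norms of $\partial_\omega^{\leq 1}(f,g)$ into the target $L^3_\omega$ norm on the left.

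For the Duhamel contribution $u_{\text{inh}}(T,x)=\int_0^T E(T-s)F(s,\cdot)(x)\,ds$, I would apply the homogeneous estimate above to each time slice and integrate in $s$. The split of the forcing norm into $0\leq s<T/4$ and $T/4\leq s<T$ is natural: in the first range $T-s\approx T$, so the Kirchhoff kernel supplies enough decay to absorb the full $L^{q_c}_s\fl{L}^1_r L^{3/2}_\omega$ norm of $F$; in the second range, the effective length in $s$ is $\approx T$, so H\"older's inequality in $s$ produces the factor $T^{1/q_c}$ paired with the $L^\infty_s$ norm of $F$ that appears in \cref{e3.1}.

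The main obstacle I expect is the sharp bookkeeping of the four radial weight powers and the time factor $T^{1/q_c^2}$, which must be aligned simultaneously through the weighted Hardy estimates. The exponent $1/q_c^2$ reflects the critical scaling at $q=q_c$, where $q_c$ is the positive root of $q_c^2-2q_c-1=0$; this algebraic identity is precisely what balances the radial decay of the $3D$ Kirchhoff kernel against the weighted $L^{q_c}$ structure on the left-hand side. A secondary subtlety is the uniformity in the spherical harmonic index $k$ when summing the per-mode estimates, which requires some care with the centrifugal potential but ultimately rests on classical monotonicity properties of weighted radial integrals.
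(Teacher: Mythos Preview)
The paper does not prove this lemma; it is simply quoted as Theorem~6.4 of \cite{MR1408499} (Lindblad--Sogge) and used as a black box. Your outline is broadly in the spirit of that reference: the $3D$-to-$1D$ reduction via $v=ru$, weighted Hardy-type inequalities on the half-line to generate the specific radial weight exponents, the Sobolev embedding $W^{1,3/2}(S^2)\hookrightarrow L^3(S^2)$ to pass from $\partial_\omega^{\le 1}$ in $L^{3/2}_\omega$ to the target $L^3_\omega$, and the split of the Duhamel integral at $s=T/4$ are all features of the original argument.

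One point deserves more care than you give it. The claim that the centrifugal potential $k(k+1)/r^{2}$ is ``harmless for upper bounds'' and that uniformity in $k$ follows from ``classical monotonicity properties'' is where the actual work lies. Lindblad--Sogge do not proceed by summing per-mode estimates with constants shown to be uniform in $k$; rather, they work directly with the explicit $3D$ Kirchhoff representation, carrying the angular variable through the kernel, and the $\partial_\omega^{\le 1}$ on the right arises from differentiating that kernel rather than from resumming a spherical-harmonic series. If you insist on the mode-by-mode route, you must prove the weighted $L^{q_c}_r$ estimate for solutions of $v_{tt}-v_{rr}+k(k+1)r^{-2}v=0$ with a constant independent of $k$, and then control the $\ell^2$-sum over $k$ against the $L^{3/2}_\omega$ and $L^3_\omega$ norms; neither step is as automatic as your sketch suggests. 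This is not a fatal flaw in the plan, but it is the step where a genuine argument is required rather than a remark.
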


\begin{lmm}[Theorem 1.4 of \cite{MR2911108}]\label{l3.3}
When $n=2,$ suppose $u$ solves the linear equation
\begin{equation*}\begin{cases}
\square u(t, x)=F(t, x), \\
u(0, x)=f(x), u_t(0, x)=g(x)
\end{cases}\end{equation*}
in $[0, T_*)\times\R^2.$ Let $q_*=\frac{q_c-1}{2}$, $s_c=1-1/q, b>\frac{1}{q_*q_c}+\frac{1}{q_c}$, $X^b\equiv H_\omega^{s_c+\delta, b}\times H_\omega^{s_c-1+\delta, b}(\delta>0),$ where
$$H_\omega^{s, b}=\{u\in H^s:\k\|{(1-\Delta_\omega)^{b/2}u}\|_{H^s}<\infty\}, \qquad \Delta_\omega=\sum_{1\leq i<j\leq n}\Omega_{ij}^2.$$
Then there exists a constant $C,$ such that $\forall\, T<T_*$
\begin{equation}\label{e3.2}
\|u\|_{L_t^{q_*q_c}\fl{L}_r^{q_c} L_\omega^2(t<T)}\leq C({\ln(2+T)})^{1/(q_*q_c)}\Big(  \|(f, g)\|_{X^b}+\|F\|_{L_t^{q_*}\fl{L}_r^{1} L_\omega^2}\Big).
\end{equation}
\end{lmm}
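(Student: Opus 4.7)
Since the statement is cited as Theorem 1.4 of \cite{MR2911108}, the proof in the present paper reduces to invoking that reference; still, the following is how I would approach it directly. The plan is to split $u = u_{\mathrm{hom}} + u_{\mathrm{Duh}}$ via Duhamel's formula and then decompose each piece into angular Fourier modes on $S^1$: writing $f(x) = \sum_{k \in \Z} f_k(r) e^{ik\theta}$ and similarly for $g$ and $F$, each mode $u_k(t,r)$ satisfies a one-dimensional wave equation on $\R_+$ with an angular-momentum potential whose size is governed by $\Omega_{12}^2$. The hypothesis $b > \frac{1}{q_* q_c} + \frac{1}{q_c}$ on the angular regularity is designed precisely so that, after summing the mode-by-mode estimates in $\ell_k^2$ to recover the $L_\omega^2$ norm on the left, the $k$-loss in the one-dimensional constants is absorbed by the weight $(1-\Delta_\omega)^{b/2}$.

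Mode by mode, I would first establish a scale-invariant estimate on each dyadic time interval $I_j = [2^j, 2^{j+1}]$ of the form
\[
\|u_k\|_{L_t^{q_* q_c} \fl{L}_r^{q_c}(I_j \times \R^2)} \lesssim (1+|k|)^{s_c + \delta} \Bigl( \|(f_k,g_k)\|_{H^{s_c+\delta}} + \|F_k\|_{L_t^{q_*} \fl{L}_r^{1}(I_j \times \R^2)} \Bigr),
\]
via a $TT^*$ argument against the reduced one-dimensional half-wave propagator, combined with the weighted radial Sobolev embedding $H^{s_c}(r\,dr)\hookrightarrow \fl{L}_r^{q_c}$; the small surplus $\delta$ compensates for being at the Strauss-critical Sobolev index in $2$D, where the endpoint scale-invariant estimate just fails. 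Summing over $j = 0, 1, \ldots, \lfloor \log_2 T \rfloor$ in $\ell_j^{q_* q_c}$ then produces the advertised factor $(\ln(2+T))^{1/(q_* q_c)}$, precisely because $L_t^{q_* q_c}$ is the scale-invariant time norm and the dyadic sum of identical scale-invariant pieces costs exactly $(\log T)^{1/(q_* q_c)}$. A final summation in $k$ against the angular Sobolev weight reproduces the stated bound, with the dual exponents $q_*$ and $\fl{L}_r^1$ on the forcing side arising naturally from the $TT^*$ pairing.

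The main obstacle is the unit-interval $TT^*$ estimate at the endpoint: in $n=2$ the free propagator obeys only the weak $t^{-1/2}$ dispersive decay and Huygens' principle fails, so one cannot borrow the $3$D argument of \cref{l3.2} from \cite{MR1408499}, which relied crucially on sharp Huygens and pointwise decay. Instead, I would combine a Keel--Tao type interpolation between the $L^2$ energy identity and the $L^\infty$ dispersive bound for the $1$D reduced propagator with a careful tracking of the angular-momentum potential, verifying that the growth in $|k|$ is at most $|k|^{1/(q_* q_c) + 1/q_c + \delta}$ so that it is absorbed by the threshold $b$ in the hypothesis. This numerological matching, together with the unavoidability of the logarithmic factor at the critical exponent, is the delicate heart of the argument.
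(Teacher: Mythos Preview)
You are correct that the paper supplies no proof of this lemma: it is quoted verbatim as Theorem~1.4 of \cite{MR2911108} and used as a black box, so the ``paper's proof'' is simply the citation, exactly as you say in your first sentence.

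Your appended sketch is a reasonable outline of the strategy actually carried out in \cite{MR2911108}: expansion in angular modes on $S^{1}$, reduction to a one-dimensional problem with angular-momentum potential, a scale-invariant estimate on each dyadic time shell, and summation over $O(\log T)$ shells in $\ell^{q_*q_c}$ to produce the factor $(\ln(2+T))^{1/(q_*q_c)}$. Two cautions if you intend to flesh this out. First, the displayed mode-by-mode inequality as written puts the full inhomogeneous norm $\|F_k\|_{L_t^{q_*}\fl{L}_r^1}$ over the \emph{same} dyadic interval $I_j$; that localization is not what Duhamel gives you, and without a global-in-time handling of the forcing (or a Christ--Kiselev type argument) the dyadic sum does not close. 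Second, the precise exponent of $|k|$ you quote, $1/(q_*q_c)+1/q_c+\delta$, is asserted rather than derived; in the actual proof the threshold on $b$ emerges from a trace/Sobolev step on the sphere combined with the radial estimate, and checking that the loss is exactly this is the substantive computation. As a roadmap your sketch is sound, but these two points are where the work lies.
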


\begin{lmm}[Energy inequality]\label{l3.4}
For any $n,$ suppose $u$ solves the linear equation
\begin{equation*}\begin{cases}
\square u(t, x)=F(t, x), \\u(0, x)=f(x), u_t(0, x)=g(x).
\end{cases}\end{equation*}
We have, $\forall\, T>0$
\begin{equation*}
\|\partial u(T, \cdot)\|_{L_x^2}\leq\|\partial u(0, \cdot)\|_{L_x^2}+\|F\|_{L_t^1L_x^2(t<T)}
\end{equation*}
Since the property of commutator \cref{e2.1}, we also have for any $k\in \mathbb{Z}^+$
\begin{equation*}
\|\partial \Gamma^{\leq k} u(T, \cdot)\|_{L_x^2}\leq C_k\k({\|\partial \Gamma^{\leq k}u(0, \cdot)\|_{L_x^2}+\|\Gamma^{\leq k}F\|_{L_t^1L_x^2(t<T)}}).
\end{equation*}
\end{lmm}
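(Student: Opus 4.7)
The plan is the classical multiplier/energy method. For the base case $k=0$, I would multiply both sides of $\square u=F$ by $\partial_t u$ and rewrite the left side as a pure time derivative plus a spatial divergence:
\[
\partial_t u\,(\partial_t^2 u-\Delta u)=\partial_t\!\left(\tfrac12|\partial_t u|^2+\tfrac12|\nabla u|^2\right)-\nabla\!\cdot\!\left(\partial_t u\,\nabla u\right).
\]
Integrating over $\R^n$, the divergence vanishes under the standard decay assumption valid for regular Cauchy-problem solutions, yielding the energy identity
\[
\frac{d}{dt}\left[\tfrac12\|\partial u(t,\cdot)\|_{L^2_x}^2\right]=\int_{\R^n}F\,\partial_t u\,dx.
\]
Cauchy--Schwarz on the right, followed by cancelling one factor of $\|\partial u(t,\cdot)\|_{L^2_x}$, yields $\frac{d}{dt}\|\partial u(t,\cdot)\|_{L^2_x}\le\|F(t,\cdot)\|_{L^2_x}$. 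Integrating from $0$ to $T$ gives the stated inequality. (One can regularize/approximate to fully justify the manipulations, or work directly in the regularity class guaranteed by Lemma \ref{l3.1}.)

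For the higher-order statement, I would reduce to the base case via the commutator relation (2.1). By induction on $|\alpha|$, iterating $[\Gamma_\alpha,\square]=C_\alpha\square$ and replacing each occurrence of $\square\Gamma^\beta u$ by $\Gamma^\beta F$ plus lower-order terms already expressed through $\Gamma^{\le|\beta|}F$, one obtains
\[
\square(\Gamma^\alpha u)=\sum_{|\beta|\le|\alpha|}c_{\alpha\beta}\,\Gamma^\beta F
\]
with constants $c_{\alpha\beta}$ independent of $u,F$. Thus $\Gamma^\alpha u$ solves a linear wave equation whose right-hand side is controlled by $\|\Gamma^{\le k}F\|_{L^1_tL^2_x}$. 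Applying the base case to each $\Gamma^\alpha u$ and summing over $|\alpha|\le k$ gives the desired inequality with $C_k$ absorbing the combinatorial constants $c_{\alpha\beta}$.

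There is no real analytical obstacle; the result is entirely standard. The only piece of bookkeeping that needs care is the inductive elimination of $\square\Gamma^\beta u$ from the right-hand side of $\square\Gamma^\alpha u$, which is what permits the final bound to depend only on $\|\Gamma^{\le k}F\|_{L^1_tL^2_x}$ rather than also on interior quantities like $\|\Gamma^{\le k-1}(\square u)\|$. Since the coefficients in (2.1) are bounded (and in fact numerical), $C_k$ grows at most combinatorially in $k$ and is harmless for the fixed $k$ values used in the proofs of Theorems \ref{t1.1} and \ref{t1.2}.
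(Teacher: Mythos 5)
Your proof is correct and is exactly the standard energy-method argument (multiplier $\partial_t u$, Cauchy--Schwarz, then commuting $\square$ with the Klainerman vector fields via \cref{e2.1} so that $\square\Gamma^\alpha u$ is a constant-coefficient combination of $\Gamma^{\le|\alpha|}F$); the paper states this lemma without proof precisely because this is the classical route. No discrepancy to report.
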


\begin{lmm}[\emph{Klainerman-Sobolev}  inequalities]\label{l3.5}
Let $v\in C^\infty(\R^{1+n})$ vanish when $|x|$ is large, $1\leq p<\infty$ and $k>n/p,$ then if $\ t>0$
\begin{equation}\label{e3.3}
\k<{t+r}>^{(n-1)/p}\k<{t-r}>^{1/p}|v(t, x)|\leq C\|\Gamma^{\leq k} v(t, \cdot)\|_{L_x^p}.
\end{equation}
\par Otherwise if $\ 1\leq p<q<\infty$ and $k\geq n/p-n/q,$ then
\begin{equation}\label{e3.4}
\k<t>^{(n-1)(1/p-1/q)}\|v(t, \cdot)\|_{L_x^q}\leq C\|\Gamma^{\leq k} v(t, \cdot)\|_{L_x^p}.
\end{equation}
\end{lmm}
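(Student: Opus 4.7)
My plan is to reduce both inequalities to the standard Euclidean Sobolev embedding by a change of variables that rescales a suitable region to the unit cube, and then to convert the resulting Cartesian derivatives into Klainerman vector fields using the algebraic identities among them. The main obstacle is tracking the weight factors during this conversion.

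For the pointwise bound \eqref{e3.3}, I fix $(t, x_0)$ with $t>0$ and set $r_0 = |x_0|$, $\lambda = \langle t + r_0\rangle$, $\mu = \langle t - r_0\rangle$. The case $\lambda \lesssim 1$ reduces to ordinary Sobolev on a compact ball, so I assume $\lambda \geq 2$. I would localize to the spherical ``box'' $U$ consisting of points with $|r - r_0| \leq \mu/10$ and angular distance $\leq 1/10$ from $\omega_0 := x_0/r_0$; its volume is $|U| \sim \mu \lambda^{n-1}$. Introducing coordinates $(s, \theta) \in [-1, 1]^n$ that map $U$ to the unit cube (with Jacobian $\sim \mu \lambda^{n-1}$) and setting $\tilde v(s, \theta) := v(t, x(s, \theta))$, the classical Sobolev embedding (which needs $k > n/p$) gives $|\tilde v(0)| \lesssim \sum_{|\alpha|\leq k} \|\partial^\alpha_{s, \theta} \tilde v\|_{L^p([-1,1]^n)}$, and changing variables back contributes the factor $(\mu \lambda^{n-1})^{-1/p}$ that matches the weight on the left of \eqref{e3.3}.

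The crux of the proof, and what I expect to be the main technical obstacle, is to express each $\partial^\alpha_{s, \theta} \tilde v$ pointwise in terms of $\Gamma^{\leq |\alpha|} v$. Angular derivatives are handled by the rotations $\Omega_{ij}$: on $U$ the tangential derivative on the sphere of radius $r \sim \lambda$ equals $r^{-1}\Omega_{ij}$, and the $O(1)$-angular rescaling gives $|\partial_\theta \tilde v| \lesssim |\Gamma^{\leq 1} v|$. The radial direction is controlled by the two key identities
\begin{equation*}
L_0 + \omega^k \Omega_{0k} = (t+r)(\partial_t + \partial_r), \qquad L_0 - \omega^k \Omega_{0k} = (t-r)(\partial_t - \partial_r),
\end{equation*}
with $\omega = x/|x|$, which combined with $\partial_t \in \Gamma$ let me write $\mu \partial_r$ as an $O(1)$-combination of $\mu(t+r)^{-1}(L_0 + \omega^k \Omega_{0k})$ and $\mu(t-r)^{-1}(L_0 - \omega^k \Omega_{0k})$ when $|t-r| \geq 1$ (the coefficients being bounded since $\mu \leq \lambda$ and $\mu \sim |t-r|$), while for $|t-r| \leq 1$ we have $\mu \sim 1$ and $|\mu \partial_r v| \lesssim |\partial v| \lesssim |\Gamma^{\leq 1} v|$ trivially. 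Iterating with the commutation relations \eqref{e2.1} then gives $|\partial^\alpha_{s, \theta} \tilde v| \lesssim |\Gamma^{\leq |\alpha|} v|$ pointwise on $U$, which together with the change-of-variables step delivers \eqref{e3.3}.

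For \eqref{e3.4} I would derive it from \eqref{e3.3} by interpolation. Taking the supremum in $x$ in \eqref{e3.3} and observing that $\langle t + r\rangle^{-(n-1)/p}\langle t - r\rangle^{-1/p}$ is maximal near the light cone $r \sim t$, with value $\sim \langle t\rangle^{-(n-1)/p}$, I obtain $\|v(t, \cdot)\|_{L_x^\infty} \lesssim \langle t\rangle^{-(n-1)/p} \|\Gamma^{\leq k} v(t, \cdot)\|_{L_x^p}$. Combined with the interpolation $\|v\|_{L_x^q} \leq \|v\|_{L_x^\infty}^{1 - p/q} \|v\|_{L_x^p}^{p/q}$ and the trivial $\|v\|_{L_x^p} \leq \|\Gamma^{\leq k} v\|_{L_x^p}$, this yields \eqref{e3.4} under the stronger requirement $k > n/p$; for the sharp threshold $k \geq n/p - n/q$ one instead uses a dyadic rescaling over annuli $A_\lambda = \{|x| \sim \lambda\}$ combined with the same vector-field identities (using $|\partial v|\lesssim |\Gamma v|/\max(\lambda,1)$ outside the light cone and the sharper bound near the cone produced by the identities above), but the interpolation proof already suffices for all applications of this lemma in the paper.
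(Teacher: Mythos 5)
The paper does not actually prove this lemma: it cites \cite{MR865359} for \eqref{e3.3} and \cite[(3.9)]{MR1386769} for \eqref{e3.4}, so you are attempting more than the authors do. Your argument for \eqref{e3.3} is the standard one (localization to a box of radial thickness $\sim\langle t-r\rangle$ and angular width $O(1)$, rescaling to the unit cube, classical Sobolev, and conversion of $\langle t-r\rangle\partial_r$ and the angular derivatives into $\Gamma$ via $L_0\pm\omega^k\Omega_{0k}=(t\pm r)(\partial_t\pm\partial_r)$). Apart from the routine degenerate regime $r_0\lesssim\langle t-r_0\rangle$, where the spherical box is undefined and one must instead rescale a Euclidean cube of side $\sim t$ and use $(t^2-r^2)\partial_i$ written in terms of $L_0$ and $\Omega_{0k}$, that half is correct.

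The genuine gap is in \eqref{e3.4}. Your interpolation argument only yields the inequality under the stronger hypothesis $k>n/p$, and your claim that this weaker version ``suffices for all applications of this lemma in the paper'' is false. In Part 2 of the proof of Theorem \ref{t1.1} the authors estimate $\|\Gamma^{\leq 1}u\|_{L^4_x}\lesssim \langle t\rangle^{-2(1/q_c-1/4)}\|\Gamma^{\leq 2}u\|_{L^{q_c}_x}$, i.e.\ they apply \eqref{e3.4} to $v=\Gamma^{\leq 1}u$ with $n=3$, $p=q_c\approx 2.41$, $q=4$, $k=1\geq 3/q_c-3/4\approx 0.49$. Your version would demand $k>3/q_c\approx 1.24$, hence $k=2$ and three vector fields in total, which exceeds the $\Gamma^{\leq 2}$ budget built into the bootstrap norm $A(T)$; the same failure occurs for $\|\Gamma^{\leq 1}\partial u\|_{L^4_x}$ (where $p=2$, so the sharp threshold is $3/4$ but yours is $3/2$) and again in the two-dimensional argument. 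So the sharp threshold $k\geq n/p-n/q$ is genuinely used, and your one-sentence gesture at ``a dyadic rescaling over annuli'' does not establish it: you would need to carry out the scaling argument of \cite{MR1386769} in which each dyadic region is rescaled to unit size and a Gagliardo--Nirenberg embedding (costing only $n(1/p-1/q)$ derivatives rather than $n/p$) is applied there, with the rescaled derivatives re-expressed through the vector fields.
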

\begin{proof}
The inequality \cref{e3.3} is the well-known \emph{Klainerman-Sobolev} inequality, which is proved in \cite{MR865359}. Heuristically, the inequality \cref{e3.4}                     can be viewed as a consequence of \cref{e3.3}. We refer \cite[(3.9)]{MR1386769} for a complete proof.
\end{proof}

\begin{lmm}[Sobolev embedding]\label{l3.6}
Let $x\in \R^n,$ and $1\leq p<q\leq \infty,$ then
\begin{equation}\label{e3.5}
\|f(x)\|_{L_x^q}\leq C \|\partial^{\leq k}f(x) \|_{L_x^p},
\end{equation}
where $k\geq n/p-n/q$ if $\ q<\infty,$ or $k> n/p$ if $\ q=\infty.$ When it comes to $S^{n-1},$ with $1\leq p<q\leq \infty,$ then
\begin{equation}\label{e3.6}
\|f(\omega)\|_{L_\omega^p}\leq C \|\Omega^{\leq k}f(\omega) \|_{L_\omega^p},
\end{equation}
where $k\geq (n-1)/p-(n-1)/q$ if $\ q<\infty,$ or $k> (n-1)/p-(n-1)/q$ if $\ q=\infty.$ For mixed-norm, we have
\begin{equation}\label{e3.7}
\|f(x)\|_{\fl{L}_r^q L_\omega^4}\leq C \|\partial^{\leq k}f(x) \|_{\fl{L}_r^p L_\omega^2},
\end{equation}
where $2\leq p\leq q \leq 4$ and $k\geq n/4.$
\end{lmm}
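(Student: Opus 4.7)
Parts (3.5) and (3.6) are classical Sobolev embeddings on $\R^n$ and on $S^{n-1}$, respectively. I would verify (3.5) by citing a standard reference, e.g.\ Chapter 13 of \cite{MR2597943}, which covers both the subcritical case $q<\infty$ (via Riesz potential and Hardy--Littlewood--Sobolev) and the supercritical case $q=\infty$ (via Morrey). For (3.6), either cite the same reference adapted to compact Riemannian manifolds, or use a partition of unity on $S^{n-1}$ together with the observation that at every point of $S^{n-1}$ the rotational fields $\Omega_{ij}$ span the tangent space; this reduces (3.6) locally to (3.5) on charts in $\R^{n-1}$ and then patches via compactness.

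For Part (3.7), the plan is to combine the spherical embedding (3.6) applied on each sphere $\{|x|=r\}$ with the Euclidean embedding (3.5) in the radial direction, with the budget $k\geq n/4$ being the critical Sobolev exponent at the endpoint $(p,q)=(2,4)$. Concretely, at the endpoint $(p,q)=(2,4)$ the estimate is exactly $\|f\|_{L^4(\R^n)}\lesssim \|\partial^{\leq k}f\|_{L^2(\R^n)}$, which is a direct consequence of (3.5) with $k\geq n/4=n/2-n/4$. At the opposite endpoint $(p,q)=(4,4)$, apply (3.6) on each sphere with the pair $(2,4)$ to get
\begin{equation*}
\|f(r\omega)\|_{L^4_\omega} \lesssim \sum_{|\alpha|\leq m}\|\Omega^\alpha f(r\omega)\|_{L^2_\omega}, \qquad m\geq (n-1)/4,
\end{equation*}
then take the $\mathcal{L}^4_r$-norm in $r$ and convert $\Omega^\alpha$ to $\partial^{\leq |\alpha|}$-derivatives. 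The intermediate cases $2\leq p\leq q\leq 4$ are then obtained by complex interpolation (Riesz--Thorin) between these two endpoint estimates, applied to the linear map $f\mapsto f$ viewed as acting between the mixed-norm spaces indexed by $(p,q)$ on the appropriate Sobolev scale.

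The main technical obstacle is the pointwise bound $|\Omega f|\leq 2|x||\partial f|$, which introduces an $r$-weight that is incompatible with the unweighted $\mathcal{L}^p_r$-norms on the right-hand side. I would resolve this by a dyadic decomposition in $|x|$: write $f=\sum_j f_j$ with $f_j$ supported in $|x|\sim 2^j$, and on each annulus rescale to a unit annulus, where $\Omega\sim\partial_\omega\sim\partial$ become comparable with uniform constants. The desired anisotropic Sobolev inequality on a unit annulus in $\R^n$ follows from (3.5) together with (3.6), and the dyadic pieces reassemble via the triangle inequality using $p\leq q$. The hypothesis $k\geq n/4$ is exactly what is needed for the Sobolev inequality to be uniform in the dyadic scale, and the constraint $2\leq p\leq q\leq 4$ ensures that the interpolation produces admissible exponent pairs without loss.
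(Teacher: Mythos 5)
Your handling of \eqref{e3.5} and \eqref{e3.6} agrees with the paper, which simply records them as classical. For \eqref{e3.7} the paper gives no argument either: it cites Lemma 3.2 of \cite{MR3724252} for the region away from the origin and the ordinary Sobolev embedding on a small ball containing the origin. Your attempt at a self-contained proof is therefore a genuinely different route, but it does not close, for two reasons. First, a structural one: the admissible set $2\le p\le q\le 4$ is a triangle in the $(1/p,1/q)$-plane with vertices $(2,4)$, $(4,4)$ and $(2,2)$; interpolating between $(2,4)$ and $(4,4)$ only produces the edge $q=4$, so pairs such as $(p,q)=(2,2)$ --- precisely the case the paper invokes when it bounds $\|\Gamma^{\le 1}\partial u\|_{\fl{L}_r^2L_\omega^4}$ by $\|\Gamma^{\le 2}\partial u\|_{L_x^2}$ --- are never reached.

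Second, and fatally, the step ``convert $\Omega^\alpha$ to $\partial^{\le|\alpha|}$'' cannot be rescued by dyadic rescaling, because $\Omega$ is scale-invariant while $\partial$ is not: on the annulus $|x|\sim 2^j$ one has $|\Omega f|\lesssim 2^j|\partial f|$, and after undoing the rescaling each angular derivative reintroduces a factor $2^{j}$. This loss is absorbed by the radial integrability gain $2^{-j(n/p-n/q)}$ only when $n/p-n/q$ matches the total number of derivatives needed, which within the claimed range occurs only at $(p,q)=(2,4)$; your assertion that $k\ge n/4$ makes the constants uniform in $j$ is incorrect (more derivatives than the scaling-critical number make the large-$j$ constants worse, not better). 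Indeed your $(4,4)$ endpoint is false: take $f(r\omega)=\chi(r-R)\eta_\delta(\omega)$ supported in $\{R\le |x|\le R+1\}$ with angular width $\delta=R^{-1/2}$; then $\|f\|_{L^4_x}\approx R^{(n-1)/4}\delta^{(n-1)/4}$ while $\|\partial^{\le k}f\|_{\fl{L}_r^4L_\omega^2}\approx R^{(n-1)/4}\delta^{(n-1)/2}\big(1+(R\delta)^{-1}\big)^k\approx R^{(n-1)/4}\delta^{(n-1)/2}$, so the ratio is $\delta^{-(n-1)/4}=R^{(n-1)/8}\to\infty$. The inequality that is both true and actually needed in the paper keeps rotational regularity on the right-hand side --- it is \eqref{e3.6} applied fiberwise in $r$ (every application in Section 3 has a spare $\Gamma$, hence a spare $\Omega$, available), or the weighted/localized statement of \cite{MR3724252}; replacing $\Omega$ by coordinate derivatives is legitimate only near the origin, where $|x|\lesssim 1$ and $|\Omega f|\lesssim|\partial f|$. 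Any correct proof must respect this split, which is exactly what the paper's two-region citation does.
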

\begin{prf}
Here inequality \cref{e3.5} is known as \emph{Sobolev} embedding, and inequality \cref{e3.6} just the same inequality on sphere. Finally the inequality \cref{e3.7} is a simple Corollary of lemma 3.2 in \cite{MR3724252} and normal \emph{Sobolev} embedding in a small ball contains origin point.
\end{prf}

\begin{clm}\label{c3.7}
We claim that there exists a constant $C_0$ depends on $\Lambda, n$ but not  on $\varepsilon$ if $\varepsilon$ is small enough, then all of the initial norms we will use can be bounded by $C_0\varepsilon$ .
\end{clm}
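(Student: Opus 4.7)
The plan is to observe that each initial norm of $\varepsilon(f,g)$ entering the estimates of Lemmas \ref{l3.2}--\ref{l3.4} is homogeneous of degree one in the data and hence factors as $\varepsilon$ times the corresponding norm of $(f,g)$; I then verify each latter norm is controlled by a constant depending only on $\Lambda$ and $n$. The uniform input is the pointwise estimate
\[
|\partial_x^{\leq 3} f(x)| + |\partial_x^{\leq 2} g(x)| \lesssim \Lambda \langle x\rangle^{-5},
\]
which follows immediately from \cref{e1.4}.

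For the four weighted mixed-norms on the right of \cref{e3.1}, I would first bound $L_\omega^{3/2}\lesssim L_\omega^\infty$ on $S^2$. Writing $|\partial_\omega f(r\omega)|\lesssim |\Omega f(r\omega)|\lesssim r|\nabla f(r\omega)|$ via the rotation fields, the pointwise bound above gives $\|\partial_\omega^{\leq 1}f(r\cdot)\|_{L_\omega^{3/2}}\lesssim \Lambda\langle r\rangle^{-4}$ and $\|g(r\cdot)\|_{L_\omega^{3/2}}\lesssim \Lambda\langle r\rangle^{-5}$. Each of the four radial integrals or suprema is then finite, since the weights $r^{1-2/q_c}$, $r^{1+1/q_c}$, $r^{2-2/q_c}$, $r^{2+1/q_c}$ are all at most $r^3$ for $r\ge 1$ while the decay is at least $\langle r\rangle^{-4}$; a representative check is
\[
\|r^{1-2/q_c}\partial_\omega^{\leq 1}f\|_{\fl{L}_r^{q_c} L_\omega^{3/2}}^{q_c} \lesssim \Lambda^{q_c}\int_0^\infty r^{q_c-2}\langle r\rangle^{-4q_c} r^2\,dr <\infty,
\]
the integrand being $\lesssim r^{q_c}$ near $0$ and $\lesssim r^{-3q_c}$ at infinity. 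For the $X^b$ norm from \cref{e3.2} used in the $n=2$ case, the operators $(1-\Delta_\omega)^{b/2}$ and $(1-\Delta)^{s/2}$ are dominated by finitely many $\Omega$- and $\partial_x$-derivatives with at most polynomial $\langle x\rangle^m$ weights, which combined with the pointwise bound yields square integrability on $\R^2$.

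For the vector-field energy norms $\|\partial\Gamma^{\leq k}u(0,\cdot)\|_{L_x^2}$ from Lemma \ref{l3.4}, at $t=0$ each Klainerman field acts as a Cartesian derivative or as a polynomial in $x$ times one (for instance $\Omega_{0k}u|_{t=0}=x^k g$ and $L_0 u|_{t=0}=x^i\partial_i f$), while higher time derivatives are replaced using $\square u=F$, with $F|_{t=0}$ a polynomial in the data and hence pointwise bounded in terms of $\Lambda$. For the bounded range of $k$ used later this yields $|\partial\Gamma^{\leq k}u(0,x)|\lesssim \Lambda\langle x\rangle^{k-5}$, which is square integrable on $\R^n$ for $n\le 3$ and $k\le 3$. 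The main obstacle is the routine bookkeeping to verify that no weight exceeds the decay rate $5$ supplied by \cref{e1.4}; once this is checked, the claim follows with $C_0=C_0(\Lambda,n)$ by linearity in $\varepsilon$.
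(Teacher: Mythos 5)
Your proposal follows essentially the same route as the paper: use the pointwise decay supplied by \eqref{e1.4}, convert the Klainerman fields and the higher time derivatives at $t=0$ into spatial derivatives of the data (via the equation for $\partial_t^2u$, $\partial_t^3u$), and check that every weight appearing in the norms of Lemmas \ref{l3.2}--\ref{l3.4} is beaten by the decay rate $5$. One point needs correcting, though: the initial norms are \emph{not} homogeneous of degree one in the data, because $\partial_t^2 u|_{t=0}$ and $\partial_t^3 u|_{t=0}$ contain the nonlinear terms $|\partial u|^p$ and $|u|^{q_c}$ evaluated at $t=0$, which scale like $\varepsilon^{p}$, $\varepsilon^{q_c}$, $\varepsilon^{2p-1}$ and $\varepsilon^{q_c+p-1}$ rather than $\varepsilon$. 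Hence your opening claim that each norm ``factors as $\varepsilon$ times the corresponding norm of $(f,g)$'' and the closing appeal to ``linearity in $\varepsilon$'' are not valid as stated. The paper handles this by extracting a single factor of $\varepsilon$ from each such term and absorbing the remaining powers of the data into the constant $M=\Lambda+\Lambda^{p}+\Lambda^{q_c}+\Lambda^{2p-1}+\Lambda^{q_c+p-1}$, which is legitimate precisely because $\varepsilon\le 1$ --- this is why the claim is phrased ``if $\varepsilon$ is small enough''. With that adjustment (replace ``linearity'' by ``$\varepsilon^{a}\le\varepsilon$ for $a\ge 1$ and $\varepsilon\le 1$''), your argument coincides with the paper's.
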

The proof of Claim \ref{c3.7} need some delicate calculation, so we will postpone it to the end of this section. For now, we are ready to prove the Theorem \ref{t1.1} and Theorem \ref{t1.2}.
\subsection{Proof of Theorem \ref{t1.1}(n=3)}\ \par
By Lemma \ref{l3.1}, we only need to prove that if $T_\varepsilon\leq\bar T_\varepsilon,$ then $\|\partial^{\leq 3}u\|_{L_t^\infty L_x^2(t<T_\varepsilon)}\leq M<\infty$, which gives the contradiction.
\par Firstly, we want to prove that for some suitable $C_M, c, \varepsilon_0$ to be fixed later, if $\ 0\leq T<T_\varepsilon\leq\bar T_\varepsilon$ then
\begin{equation}\label{e3.8}
\begin{aligned}
&A(T)=\k<T>^{1/q_c^2}\|\Gamma^{\leq 2}u(T, \cdot)\|_{\fl{L}_r^{q_c}L_\omega^3}&\leq C_M \varepsilon, \\
&B(T)=\|\partial\Gamma^{\leq 2}u(T, \cdot)\|_{L_x^2}&\leq C_M \varepsilon.
\end{aligned}
\end{equation}

With the help of Claim \ref{c3.7}, we choose $C_M$ satisfying that $A(0), B(0)\leq C_M\varepsilon/4.$ Take $X=\{T\in [0, T_\varepsilon):A(t), B(t)\leq C_M\varepsilon, \forall\, t\in[0, T]\}.$ We want to prove $X=[0, T_\varepsilon).$ Consequently, it would suffice to show that,
if $T$ is as above, then equation \cref{e3.8} implies that $A(T), B(T)\leq C_M\varepsilon/2$. Because of Lemma \ref{l3.1}, we assume $T>1$ without loss of generality.

\part[Estimate of $A(t)$]
By Lemma \ref{l3.2}, we obtain
\begin{equation}\label{e3.9}
\begin{aligned}
A(T)&\leq C_M\varepsilon/4+C\k\|{\Gamma^{\leq 2}|u|^{q_c}}\|_{L_t^{q_c}\fl{L}_r^1 L_\omega^{3/2}(t<T/4)}\\
&\quad+C\k\|{\Gamma^{\leq 2}|\partial u|^p}\|_{L_t^{q_c}\fl{L}_r^1 L_\omega^{3/2}(t<T/4)}\\
&\quad+CT^{1/q_c}\k\|{\Gamma^{\leq 2}|u|^{q_c}}\|_{L_t^{\infty}\fl{L}_r^1 L_\omega^{3/2}(T/4<t<T)}\\
&\quad+CT^{1/q_c}\k\|{\Gamma^{\leq 2}|\partial u|^p}\|_{L_t^{\infty}\fl{L}_r^1 L_\omega^{3/2}(T/4<t<T)}.
\end{aligned}
\end{equation}
\par First we consider the spatial norm part of $\ |u|$ terms, by \emph{H\"older's} inequality and Lemma \ref{l3.6}, we see
\begin{equation*}
\begin{aligned}
\k\|{\Gamma^{\leq 2}|u|^{q_c}}\|_{\fl{L}_r^1 L_\omega^{3/2}}&\leq C\k\|{|\Gamma^{\leq 2}u||u|^{q_c-1}}\|_{\fl{L}_r^1 L_\omega^{3/2}}+C\k\|{|\Gamma^{\leq 1}u|^2|u|^{q_c-2}}\|_{\fl{L}_r^1 L_\omega^{3/2}}\\
&\leq C\k\|{\Gamma^{\leq 2}u}\|_{\fl{L}_r^{q_c}L_\omega^3}\k\|u\|_{\fl{L}_r^{q_c} L_\omega^{3\sqrt2}}^{q_c-1}+C\k\|{\Gamma^{\leq 1}u}\|_{\fl{L}_r^{q_c}L_\omega^3}^2\k\|{u}\|_{\fl{L}_r^{q_c} L_\omega^\infty}^{q_c-2}\\
&\leq C\k\|{\Gamma^{\leq 2}u}\|_{\fl{L}_r^{q_c}L_\omega^3}^{q_c}\\
&\leq C\k({\k<t>^{-1/q_c^2}A(t)})^{q_c}\\
&\leq CC_M^{q_c}\varepsilon^{q_c}\k<t>^{-1/q_c}.
\end{aligned}
\end{equation*}
\par Then, we consider the spatial norm part of $\ |\partial u|$ terms, by \emph{H\"older's} inequality and Lemma \ref{l3.5}, Lemma \ref{l3.6}, we have
\begin{equation*}
\begin{aligned}
\k\|{\Gamma^{\leq 2}|\partial u|^{p}}\|_{\fl{L}_r^1 L_\omega^{3/2}}&\leq C\k\|{|\Gamma^{\leq 2}\partial u||\partial u|^{p-1}}\|_{\fl{L}_r^1 L_\omega^{3/2}}+C\k\|{|\Gamma^{\leq 1}\partial u|^2|\partial u|^{p-2}}\|_{\fl{L}_r^1 L_\omega^{3/2}}\\
&\leq C\k\|{\Gamma^{\leq 2}\partial u}\|_{\fl{L}_r^2 L_\omega^2}\k\|{\partial u}\|_{\fl{L}_r^2 L_\omega^6}\k\|{\partial u}\|_{\fl{L}_r^\infty L_\omega^\infty}^{p-2}+C\k\|{\Gamma^{\leq 1}\partial u}\|_{\fl{L}_r^2 L_\omega^3}^2\k\|{\partial u}\|_{\fl{L}_r^\infty L_\omega^\infty}^{p-2}\\
&\leq C\k\|{\Gamma^{\leq 2}\partial u}\|_{L_x^2}^2\k\|{\partial u}\|_{L_x^\infty}^{p-2} \\
&\leq C\k\|{\Gamma^{\leq 2}\partial u}\|_{L_x^2}^p\k({\k<t>^{-1}})^{p-2}\\
&\leq CC_M^{p}\varepsilon^{p}\k<t>^{2-p}.
\end{aligned}
\end{equation*}
\par Back to equation \cref{e3.9}, assume $\varepsilon_0<1/C_M.$ Since $\ p\geq q_c=1+\sqrt2$, $0<\varepsilon<\varepsilon_0,$ we conclude
\begin{equation*}
\begin{aligned}
A(T)&\leq C_M\varepsilon/4 +C C_M^{q_c}\varepsilon^{q_c} \k({1+\k({\int_0^{T/4}\k<t>^{-1}\d t})^{1/q_c}})\\
&\quad+C C_M^{p}\varepsilon^{p} \k({\k<T>^{q_c-p}+\k({\int_0^{T/4}\k<t>^{-q_c(p-2)}\d t})^{1/q_c}})\\
&\leq C_M\varepsilon \k({1/4+ C C_M^{q_c-1}\varepsilon^{q_c-1}({\ln(T+2)})\ ^{1/q_c}+C C_M^{p-1}\varepsilon^{p-1}})\\
&\leq C_M\varepsilon \k({1/3+ C C_M^{q_c-1}c^{1/q_c}+C C_M^{q_c-1}\varepsilon_0^{q_c-1}}).
\end{aligned}
\end{equation*}
\part[Estimate of $B(t)$]
For this part, by the Lemma \ref{l3.4}, we know that
\begin{equation}\label{e3.10}
B(T)\leq C_M \varepsilon/4 +C\k\|{\Gamma^{\leq 2}|u|^{q_c}}\|_{L_t^1L_x^2 (t<T)}+C\k\|{\Gamma^{\leq 2}|\partial u|^{p}}\|_{L_t^1L_x^2(t<T)}.
\end{equation}
\par For the $|u|$ term, 
by \emph{H\"older's} inequality and Lemma \ref{l3.5},
we have
\begin{equation*}
\begin{aligned}
\k\|{\Gamma^{\leq 2}|u|^{q_c}}\|_{L_x^2}&\leq C\k\|{\Gamma^{\leq 2}u}\|_{L_x^{q_c}}\k\|u\|_{L_x^{a}}^{q_c-1}+C\k\|{\Gamma^{\leq 1}u}\|_{L_x^4}^2\k\|u\|_{L_x^{\infty}}^{q_c-2}\\
&\leq C\k\|{\Gamma^{\leq 2}u}\|_{L_x^{q_c}}^{q_c}\k({\k<t>^{-2\k({1/q_c-1/a})(q_c-1)}+\k<t>^{-2\k({1/q_c-1/4})\cdot2-2\k({1/q_c})(q_c-2)}})\\
&\leq C\k\|{\Gamma^{\leq 2}u}\|_{L_x^{q_c}}^{q_c}\k<t>^{-1}\\
&\leq C\k({\k<t>^{-1/q_c^2}A(t)})^{q_c}\k<t>^{-1}\\
&\leq CC_M^{q_c}\varepsilon^{q_c}\k<t>^{1-q_c},
\end{aligned}
\end{equation*}
where $a=(q_c-1)/(1/2-1/q_c)$.
\par For the $|\partial u|$ term, by \emph{H\"older's} inequality and Lemma \ref{l3.5} again, we know
\begin{equation*}
\begin{aligned}
\k\|{\Gamma^{\leq 2}|\partial u|^{p}}\|_{L_x^2}&\leq C\k\|{\Gamma^{\leq 2}\partial u}\|_{L_x^2}\|\partial u\|_{L_x^{\infty}}^{p-1}+C\k\|{\Gamma^{\leq 1}\partial u}\|_{L_x^4}^2\|\partial u\|_{L_x^{\infty}}^{p-2}\\
&\leq  C\k\|{\Gamma^{\leq 2}\partial u}\|_{L_x^2}^p\k({\k<t>^{-2(1/2)(p-1)}+\k<t>^{-2(1/2-1/4)2-2(1/2)(p-2)}})\\
&\leq CC_M^{p}\varepsilon^{p}\k<t>^{1-p}.
\end{aligned}
\end{equation*}
\par Back to \cref{e3.10}, we obtain
\begin{equation*}
\begin{aligned}
B(T)&\leq C_M\varepsilon \k({1/4+ C C_M^{q_c-1}\varepsilon^{q_c-1}+C C_M^{p-1}\varepsilon^{p-1}})\leq C_M\varepsilon\k({1/4+ C C_M^{q_c-1}\varepsilon_0^{q_c-1}}).
  \end{aligned}
\end{equation*}
\part[The boundness of $A(t),B(t)$]
We choose $\varepsilon_0$ and the constant $c$ in $\bar T_\varepsilon$ small enough, such that $A(T), B(T)\leq C_M\varepsilon/2,$ which completes the proof.

\subsection{Proof of Theorem \ref{t1.2}(n=2)}\ \par
Similar to last subsection, we need to prove that for some suitable $C_M, c, \varepsilon_0$ to be fixed hereafter, if $\ 0\leq T<T_\varepsilon\leq\tilde T_\varepsilon$ then
\begin{equation}\label{e3.11}
\begin{aligned}
&A(T)=\|\Gamma^{\leq 2}u\|_{L_t^{q_*q_c}\fl{L}_r^{q_c} L_\omega^2 (t<T)}&&\leq C_M \varepsilon^{1/q_c}, \\
&B(T)=\|\partial\Gamma^{\leq 2}u(T, \cdot)\|_{L_x^2}&&\leq C_M \varepsilon,
\end{aligned}
\end{equation}
where $q_*=(q_c-1)/2$ as in Lemma \ref{l3.3}. By Claim \ref{c3.7}, we fix $C_M$ such that $B(0)\leq C_M\varepsilon/4.$ We need to show that equation \cref{e3.11} implies $A(T)\leq C_M \varepsilon^{1/q_c}/2$ and $B(T) \leq C_M \varepsilon/2$ for above $T$ and $T_\varepsilon.$

\part[Estimate of $A(t)$]
By Lemma \ref{l3.3}, and $T<\tilde T_\varepsilon,$ we get
\begin{equation}\label{e3.12}
\begin{aligned}
A(T)&\leq cC_M\varepsilon/4^{1/q_c}+cC\varepsilon^{1/q_c-1}\k\|{\Gamma^{\leq 2}|u|^{q_c}}\|_{L_t^{q_*}\fl{L}_r^{1} L_\omega^2(t<T)}\\
&\quad+cC\varepsilon^{1/q_c-1}\k\|{\Gamma^{\leq 2}|\partial u|^p}\|_{L_t^{q_*}\fl{L}_r^{1} L_\omega^2(t<T)}.
\end{aligned}
\end{equation}
For the second term, similar to $n=3$, we have
\begin{equation*}
\begin{aligned}
\k\|{\Gamma^{\leq 2}|u|^{q_c}}\|_{L_t^{q_*}\fl{L}_r^{1} L_\omega^2}&\leq C\k\|{|\Gamma^{\leq 2}u||u|^{q_c-1}}\|_{L_t^{q_*}\fl{L}_r^{1} L_\omega^2}+C\k\|{|\Gamma^{\leq 1}u|^2|u|^{q_c-2}}\|_{L_t^{q_*}\fl{L}_r^{1} L_\omega^2}\\
&\leq C\k\|{\Gamma^{\leq 2}u}\|_{L_t^{q_*q_c}\fl{L}_r^{q_c} L_\omega^2}^{q_c}\\
&\leq CC_M^{q_c}\varepsilon,
\end{aligned}
\end{equation*}
and
\begin{equation*}
\begin{aligned}
\k\|{\Gamma^{\leq 2}|\partial u|^{p}}\|_{\fl{L}_r^{1} L_\omega^2}&\leq C\k({\k\|{\Gamma^{\leq 2}\partial u}\|_{\fl{L}_r^2 L_\omega^2}\k\|{\partial u}\|_{\fl{L}_r^2 L_\omega^\infty}+\k\|{\Gamma^{\leq 1}\partial u}\|_{\fl{L}_r^2 L_\omega^4}^2})\k\|{\partial u}\|_{\fl{L}_r^\infty L_\omega^\infty}^{p-2}\\
&\leq C\k\|{\Gamma^{\leq 2}\partial u}\|_{L_x^2}^2\k\|{\partial u}\|_{L_x^\infty }^{p-2} \\
&\leq CC_M^{p}\varepsilon^{p}\k<t>^{1-p/2}.
\end{aligned}
\end{equation*}
\par Since $p\geq q_c,$ $(1-q_c/2)q_*=-1,$ $T<\tilde T_\varepsilon$, we get
\begin{equation*}
\begin{aligned}
A(T)&\leq cC_M\varepsilon^{1/q_c}+cCC_M^{q_c}\varepsilon^{1/q_c-1+1}+cCC_M^{p}\varepsilon^{1/q_c-1+p}\k({\int_0^T \k<t>^{(1-p/2)q_*}\d t})^{1/q_*}\\
&\leq cC_M\varepsilon^{1/q_c}+cCC_M^{q_c}\varepsilon^{1/q_c-1+1}+cCC_M^{p}\varepsilon^{1/q_c-1+p}({\ln( T+2)})^{1/q_*}\\
&\leq C_M\varepsilon^{1/q_c}(c+cCC_M^{q_c-1}).
\end{aligned}
\end{equation*}
\part[Estimate of $B(t)$]
\par In this part, by applying the energy inequality, we obtain
\begin{equation*}
B(T)\leq C_M \varepsilon/4 +C\k\|{\Gamma^{\leq 2}|u|^{q_c}}\|_{L_t^1L_x^2 (t<T)}+C\k\|{\Gamma^{\leq 2}|\partial u|^{p}}\|_{L_t^1L_x^2(t<T)}.
\end{equation*}
\par For the second term, by \emph{H\"older's} inequality, Lemma \ref{l3.5} and Lemma \ref{l3.6}, we have
\begin{equation*}
\begin{aligned}
\k\|{\Gamma^{\leq 2}|u|^{q_c}}\|_{L_t^1L_x^2}&=\k\|{\Gamma^{\leq 2}|u|^{q_c}}\|_{L_t^1\fl{L}_r^2 L_\omega^2}\\
&\leq C\k\|{\k\|u\|_{\fl{L}_r^\infty L_\omega^\infty}^{q_c-2}\k({\k\|{\Gamma^{\leq 2}u}\|_{\fl{L}_r^{q_c} L_\omega^2}\k\|u\|_{\fl{L}_r^{q_c+1} L_\omega^\infty}+\k\|{\Gamma^{\leq 1}u}\|_{\fl{L}_r^{4} L_\omega^4}^2})}\|_{L_t^{1}}\\
&\leq C\k\|{\k\|{\Gamma^{\leq 1}u}\|_{L_x^{q_c}}^{q_c-2}\k\|{\Gamma^{\leq 2}u}\|_{\fl{L}_r^{q_c} L_\omega^2}^{2} \k<t>^{(-1/q_c)(q_c-2)}}\|_{L_t^{1}}\\
&\leq C\k\|{\k\|{\Gamma^{\leq 2}u}\|_{\fl{L}_r^{q_c} L_\omega^2}^{q_c}\k<t>^{q_c-4}}\|_{L_t^{1}}\\
&\leq C\k\|{\Gamma^{\leq 2}u}\|_{L_t^{q_*q_c}\fl{L}_r^{q_c} L_\omega^2}^{q_c}\k\|{\k<t>^{q_c-4}}\|_{L_t^{1+q_c}}\\
&\leq CC_M^{q_c}\varepsilon^{q_c},
\end{aligned}
\end{equation*}
and the same for the last term
\begin{equation*}
\begin{aligned}
\k\|{\Gamma^{\leq 2}|\partial u|^{p}}\|_{L_x^2}&\leq C\k\|{\Gamma^{\leq 2}\partial u}\|_{L_x^2}\|\partial u\|_{L_x^{\infty}}^{p-1}+C\k\|{\Gamma^{\leq 1}\partial u}\|_{L_x^4}^2\|\partial u\|_{L_x^{\infty}}^{p-2}\\
&\leq  C\k\|{\Gamma^{\leq 2}\partial u}\|_{L_x^2}^p\k({\k<t>^{-(1/2)(p-1)}+\k<t>^{-(1/2-1/4)2-(1/2)(p-2)}})\\
&\leq CC_M^{p}\varepsilon^{p}\k<t>^{(1-p)/2}.
\end{aligned}
\end{equation*}
\par By the definition of $\ \tilde T$ and $\varepsilon_0,$ we know
\begin{equation*}
\begin{aligned}
B(T)&\leq C_M\varepsilon \k({1/4+ C C_M^{q_c}\varepsilon^{q_c-1}+C C_M^{p-1}\varepsilon^{p-1}})\leq C_M\varepsilon \k({1/4+ C C_M^{q_c}\varepsilon_0^{q_c-1}}).
\end{aligned}
\end{equation*}
\part[The boundness of $A(t),B(t)$]
Now, by choosing $\varepsilon_0$ and the constant $c$ (in $\tilde T_\varepsilon$) small enough, we conclude $A(T)\leq C_M \varepsilon^{1/q_c}/2$ and $B(T) \leq C_M \varepsilon/2,$ which completes the proof.

\subsection{Proof of Claim \ref{c3.7}}\ \par

Before the discussion, set $h=\k\{{\partial_x^{\leq 1}f, g}\}$, by equation \cref{e1.1} we see
\begin{equation*}
\begin{aligned}
\k|{\k\{{\partial_x^{\leq 3} u, \partial_x^{\leq 2} \partial_tu}\}}|_{t=0}&\leq C\varepsilon\k|{\partial_x^{\leq 2} h}|, \\[10pt]
\k|{\partial_x^{\leq 1}\partial_t^2 u}|_{t=0}&=\k|{\partial_x^{\leq 1}\k({|\partial u|^p+|u|^{q_c}+\Delta u})}|_{t=0}\\
&\leq C\varepsilon\k|{\k\{{\partial_x^{\leq 1} h|h|^{p-1}, |h|^{q_c}, \partial_x^{\leq 2} h}\}}|, \\[10pt]
\k|{\partial_t^3u}|_{t=0}&\leq C\k|{\k\{{\partial \partial_t u\k|{\partial u}|^{p-1}, \partial_tu|u|^{q_c-1}, \partial_t\Delta u}\}}|_{t=0}\\
&\leq C\k|{\k\{{\k({|\partial u|^p+|u|^{q_c}+\Delta u})|\partial u|^{p-1}, \partial_x\partial_t u\k|{\partial u}|^{p-1}, \partial_tu|u|^{q_c-1}, \partial_t\Delta u}\}}|\\
&\leq C\varepsilon\k|{\k\{{ |h|^{2p-1}, |h|^{q_c+p-1}, \partial_x^{\leq 1} h|h|^{p-1}, |h|^{q_c}, \partial_x^{\leq 2} h  }\}}|.
\end{aligned}
\end{equation*}
Set $M=\Lambda+\Lambda^p+\Lambda^{q_c}+\Lambda^{2p-1}+\Lambda^{q_c+p-1},$ we want to show that all of the initial norms can be controlled by $C\varepsilon M,$ where $C$ does not depend on $\varepsilon$ and $M.$

\par We begin with $A(0)=\|\Gamma^{\leq 2} u(0, \cdot)\|_{\fl{L}_r^{q_c}L_\omega^3}(n=3)$, here we get
\begin{equation*}
\begin{aligned}
\|\Gamma^{\leq 2} u(0, \cdot)\|_{\fl{L}_r^{q_c}L_\omega^3}&\leq C\|\Gamma^{\leq 3} u(0, \cdot)\|_{L_x^{q_c}}\\
&\leq C\k\|{\k<x>^5\partial^{\leq 3} u(0, \cdot)}\|_{L_x^\infty}\k\|{\k<x>^{-2}}\|_{L_x^{q_c}}\\
&\leq C\varepsilon M.
\end{aligned}
\end{equation*}
\par Similarly, for $B(0)=\|\partial\Gamma^{\leq 2}u(0, \cdot)\|_{L_x^2}(n=2, 3),$  we have
\begin{equation*}
\begin{aligned}
\|\partial\Gamma^{\leq 2}u(0, \cdot)\|_{L_x^2}&\leq C\k\|{\k<x>^5\partial^{\leq 3} u(0, \cdot)}\|_{L_x^\infty}\k\|{\k<x>^{-3}}\|_{L_x^2}\leq C\varepsilon M.
\end{aligned} 
\end{equation*}
\par For $\|r^{1-2/q_c}\partial_\omega^{\leq 1}\Gamma^{\leq 2}u(0, \cdot)\|_{\fl{L}_r^{q_c}L_\omega^{3/2}}(n=3)$ which comes from right hand side (RHS) of equation \cref{e3.1}, we see
\begin{equation*}
\begin{aligned}
\|r^{1-2/q_c}\partial_\omega^{\leq 1}\Gamma^{\leq 2}u(0, \cdot)\|_{\fl{L}_r^{q_c}L_\omega^{3/2}}&\leq  C\|\k<x>^{4-2/q_c}\partial^{\leq 3}f\|_{L_x^{q_c}}\\
&\leq  C\k\|{\k<x>^{5}\partial^{\leq 3}u(0, \cdot)}\|_{L_x^\infty}\k\|{\k<x>^{-1-2/q_c}}\|_{L_x^{q_c}}\\
&\leq C\varepsilon M.
\end{aligned}
\end{equation*}
\par The other three terms from equation \cref{e3.1} can be controlled by similar arguments. At last, for the term $\k\|{\Gamma^{\leq 2}({u, \partial_t u})(0, \cdot)}\|_{X^b}(n=2)$ which comes from equation \cref{e3.2}, we have
\begin{equation*}
\begin{aligned}
\k\|{\Gamma^{\leq 2}({u, \partial_t u})(0, \cdot)}\|_{X^b}&\leq \k\|{\Gamma^{\leq 2}({u, \partial_t u})(0, \cdot)}\|_{\k({H^1, L^2})}.
\end{aligned}
\end{equation*}
\par Through the previous discussion, we finish the proof of the Claim \ref{c3.7}.

\section{Proof of Theorem \ref{t1.3}}
\par Before the proof, we consider a simple coordinate transform with $(u, u_t)|_{t=2}=(f, g)$, $(v, v_t)|_{t=2}=(\tilde f, \tilde g)$. Due to the property of symmetry, $(u, v)$ solve the equivalent $1$-D integral equations in $t\geq2$
\begin{align*}
u(t, r)&=\varepsilon u_{o}(t, r)+L|v|^q(t, r), \\
v(t, r)&=\varepsilon v_{o}(t, r)+L|\partial_t u|^p(t, r),
\end{align*}
where
\begin{align*}
u_o(t+2, r)&=\frac{1}{2r}\k({(r+t)f(r+t)+(r-t)f(r-t)+\int_{r-t}^{r+t}\rho g(\rho)\d\rho}), \\
v_o(t+2, r)&=\frac{1}{2r}\k({(r+t)\tilde f(r+t)+(r-t)\tilde f(r-t)+\int_{r-t}^{r+t}\rho \tilde g(\rho)\d\rho}), \\
LF(t, r)&=\frac{1}{2r}\int_0^t\int_{r-t+s}^{r+t-s}\rho F(s, \rho)\d\rho\d s.
\end{align*}
for $t\geq 0$. Here for convenience we consider $u|_{t<2}=v|_{t<2}=0.$

Here we denote $f(|x|)=f(-|x|)=f(x)$ and the rest is similar. Then the lower limits of the integrals (to $\rho$) may be replaced by $|r-t|$ or $|r - (t - s)|$ because of the symmetric assumption. To control the iteration procedure, we need to estimate some derivatives. With the notation $w=\partial_t u,$ we find that
\begin{align}
w(t, r)&=\varepsilon\partial_t u_o(t, r)+r^{-1}K_+|v|^q(t, r), \label{e4.1}\\
v(t, r)&=\varepsilon v_o(t, r)+L|w|^p(t, r), \label{e4.2}\\
\partial_r\{rv(t, r)\}&=\varepsilon \partial_r\big(rv_o(t, r)\big)+K_-|w|^p(t, r), \label{e4.3}
\end{align}
where
\begin{equation*}
K_{\pm}F(t, r)=\frac{1}{2}\int_0^t(r+t-s)F(s, r+t-s)\pm(r-t+s)F(s, r-t+s)\d s.
\end{equation*}

To control the norm of $(w, v),$ we set
\begin{equation*}
\|(w, v)\|=\|\omega_1 w\|_{L_{t, r}^\infty}+\|\omega_2 v\|_{L_{t, r}^\infty}+\|\omega_3 r\partial_r v\|_{L_{t, r}^\infty}
\end{equation*}
where weight functions $\omega_1, \omega_2, \omega_3$ are defined by
\begin{align*}
\omega_1(t, r)&=\begin{cases}\k<{r}>\k<{t-r}>^{\mu/p+q-2}&(r<t/2), \\\k<{t-r}>^{\mu/p}\k<{t+r}>^{q-1}&(r\geq t/2);\end{cases}\\
\omega_2(t, r)&=\begin{cases}\k<{r}>^{p-2}\k<{t+r}>^{3-p+\mu/pq}&(r<t/2), \\\k<{t-r}>^{\mu/pq}\k<{t+r}>&(r\geq t/2);\end{cases}\\
\omega_3(t, r)&=\k<{t-r}>^{\mu+pq-2p}
\end{align*}
for $t \geq 0$ and $r \geq 0,$ with a fixed $\mu<1$ which satisfies
\begin{equation}\label{e4.4}
-\mu-pq+2p\leq p-3-\mu/pq.
\end{equation}

Now we consider the system of integral equations \cref{e4.1}-\cref{e4.3} in the close subset of complete metric space
\begin{equation*}
\begin{aligned}
X_\varepsilon=\Big\{(w, v):&~w, v, r\partial_r v\in C\big([2,\infty)\times \R\big), \|(w, v)\|\leq C_1\varepsilon, \\
&\supp(w, v)\subset \{t-r\geq 1,t\geq 2\}\Big\},
\end{aligned}
\end{equation*}
where $C_1$ will be determined later.

\begin{lmm}\label{l4.1}
Suppose that \cref{e1.9} is satisfied, $(w, v)\in X_\varepsilon.$ Then we have
\begin{align}
&\|\omega_2 L|w|^p\|_{L_{t, r}^\infty}\leq C\|\omega_1w\|_{L_{t, r}^\infty}^p, \label{e4.5}\\
&\|\omega_1r^{-1}K_+|v|^q\|_{L_{t, r}^\infty}\leq C\|\omega_2 v\|_{L_{t, r}^\infty}^q+C\|\omega_2 v\|_{L_{t, r}^\infty}^{q-1}\|\omega_3r\partial_rv\|_{L_{t, r}^\infty}, \label{e4.6}\\
&\|\omega_3 K_-|w|^p\|_{L_{t, r}^\infty}\leq C\|\omega_1 w\|_{L_{t, r}^\infty}^p.\label{e4.7}
\end{align}
\end{lmm}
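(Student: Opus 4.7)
\medskip
\noindent\textbf{Proof proposal.} The plan is to prove the three pointwise bounds \cref{e4.5}--\cref{e4.7} in the same spirit: substitute the weighted $L^\infty$ control of $w$, $v$, and $r\partial_r v$ into the integrals defining $L$ and $K_\pm$, factor out the relevant norms, and then reduce the problem to purely geometric integral estimates of the form
\begin{equation*}
\frac{1}{2r}\int_0^t\int_{|r-t+s|}^{r+t-s}\rho\,\om_1(s,\rho)^{-p}\d\rho\d s\les\om_2(t,r)^{-1},
\end{equation*}
and analogous single-variable integrals along the outgoing/incoming null rays $\rho=r\pm(t-s)$ for \cref{e4.6}--\cref{e4.7}.

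For \cref{e4.5} I would start by splitting the $(t,r)$ region into the interior zone $r<t/2$ and the near-cone zone $r\ge t/2$, which is dictated by the piecewise definition of $\om_2$. Inside each zone I would further partition the $(s,\rho)$ domain of integration into four natural sub-regions according to whether $\rho<s/2$ or $\rho\ge s/2$ and whether $s$ is small or close to $t$, each of which carries an explicit power of $\k<s+\rho>$, $\k<s-\rho>$, and $\k<\rho>$ coming from $\om_1^{-p}$. In each sub-region the double integral is straightforward to evaluate, and the critical matching between adjacent regions is guaranteed precisely by the condition $-\mu-pq+2p\le p-3-\mu/pq$ in \cref{e4.4} together with the restriction $\mu<1$, which ensures the $\k<t-r>$-powers on the boundary $\rho=s/2$ close up without producing logarithms. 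The estimate \cref{e4.7} is treated the same way but is simpler: $K_-$ is a one-dimensional integral along the two null lines $\rho=|r-t+s|$ and $\rho=r+t-s$, so after inserting $|w|^p\le\om_1^{-p}\|w\om_1\|_{L_{t,r}^\infty}^p$ one is left with a single integral in $s$ to be split at $s=t/2$ and $s=t-r$ (or $s=r-t$ when relevant), and the condition \cref{e4.4} again aligns the regimes.

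The estimate \cref{e4.6} is the delicate one and will be the main obstacle. The difficulty is that $\om_2^{-q}$ alone is not integrable against the $K_+$-kernel in the interior region $\rho<s/2$: the $\k<\rho>^{(p-2)(-q)}$ factor coming from $\om_2^{-q}$ decays too slowly near the axis. This is precisely why \cref{e4.6} allows a \emph{mixture} of $\|\om_2 v\|_{L_{t,r}^\infty}$ and $\|\om_3 r\pa_r v\|_{L_{t,r}^\infty}$. The idea I would follow is to rewrite $\rho|v|^q$ on the interior piece by exploiting the radial identity
\begin{equation*}
\rho |v(s,\rho)|^q=|v(s,\rho)|^{q-1}\,\bigl(\pa_\rho(\rho v)-\rho\pa_\rho v\bigr)\cdot\mathrm{sgn}(v)
\end{equation*}
(or, equivalently, an integration by parts in $\rho$ after grouping the $\pm$-terms in $K_+$) so that one factor of $v$ gets traded for $r\pa_r v$ weighted by $\om_3^{-1}$ near the axis, while the remaining $|v|^{q-1}$ is controlled by $\om_2^{-(q-1)}$. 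After this substitution the interior contribution gives the cross term $\|\om_2 v\|^{q-1}\|\om_3 r\pa_r v\|$ in \cref{e4.6}, while the near-cone zone $\rho\ge s/2$ is handled by the pure $\om_2^{-q}$ bound as in \cref{e4.5}.

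Once the geometric integrals are in place, all three estimates reduce to checking a finite list of power-counting inequalities, each of which is implied by \cref{e1.9} (which governs $p$, $q$) together with \cref{e4.4} (which fixes the admissible range of $\mu$). I would organize the verification as a table of exponents, one row per sub-region, rather than presenting the inequalities one at a time. The outcome is the pointwise bound of $\om_2 L|w|^p$, $\om_1 r^{-1}K_+|v|^q$, and $\om_3 K_-|w|^p$ by the indicated weighted norms, which is exactly \cref{e4.5}--\cref{e4.7}.
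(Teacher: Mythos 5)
Your plan for \cref{e4.5} and \cref{e4.7} coincides with the paper's proof: split the observation point into $r<t/2$ and $r\ge t/2$ according to the piecewise weights, split the source region at $\rho=s/2$, pass to the null coordinates $\tau=s+\rho$, $\sigma=s-\rho$, and close the power counting using \cref{e4.4} together with \cref{e1.9}. That part is fine.

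The gap is in \cref{e4.6}, in two respects. First, your diagnosis of where the derivative term is forced is wrong. The pure bound $|v|^q\le\omega_2^{-q}\|\omega_2v\|_{L_{t,r}^\infty}^q$ \emph{does} suffice in the interior source region $\rho<s/2$: since $2<p<3$, the factor $\rho\langle\rho\rangle^{-(p-2)q}$ is harmless near $\rho=0$, and the resulting line integrals in $s$ close by power counting --- this is exactly how the paper handles the entire range $r\ge 1/4$, interior region included, without ever touching $\pa_r v$. The genuine obstruction is the prefactor $r^{-1}$ in $\omega_1r^{-1}K_+|v|^q$ when the \emph{observation point} $r$ is near the axis (say $r<1/4$): there one must exploit the cancellation between the outgoing contribution at $\rho=r^+=r+t-s$ and the incoming one at $\rho=|r^-|=|r-t+s|$ (the integrand of $K_+$ vanishes identically at $r=0$), and quantifying that cancellation is what produces $|\pa_rv|\,|v|^{q-1}$ and hence the cross term $\|\omega_2v\|_{L_{t,r}^\infty}^{q-1}\|\omega_3r\pa_rv\|_{L_{t,r}^\infty}$. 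Second, the identity you propose in order to trade $v$ for $r\pa_rv$ is vacuous: $\pa_\rho(\rho v)-\rho\pa_\rho v=v$, so your right-hand side is just $|v|^q$ and no derivative is extracted; nor is there an integral in $\rho$ inside $K_+$ to integrate by parts. The working argument is to decompose $r^+F(s,r^+)+r^-F(s,|r^-|)$ as $(r^++r^-)F(s,r^+)+r^-\bigl(F(s,r^+)-F(s,|r^-|)\bigr)$ with $F=|v|^q$, use $r^++r^-=2r$ and $r^+-|r^-|\le 2r$ to absorb the $r^{-1}$, and apply the mean value theorem to the difference; the resulting weight $\omega_2^{-(q-1)}\omega_3^{-1}$ is then integrated along the null rays using \cref{e4.4} once more.
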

\begin{lmm}
Suppose that \cref{e1.9} is satisfied, $(w, v), (\bar w, \bar v)\in X_\varepsilon.$ Set
\begin{equation*}
\begin{aligned}
\tilde\omega_1&=\begin{cases}r\k<{t-r}>^{\mu/p+q-2}&(r<t/2), \\\k<{t-r}>^{\mu/p}\k<{t+r}>^{q-1}&(r\geq t/2);\end{cases}\\
\tilde\omega_2&=\begin{cases}r^{p-2}\k<{t+r}>^{3-p+\mu/pq}&(r<t/2), \\\k<{t-r}>^{\mu/pq}\k<{t+r}>&(r\geq t/2).\end{cases}
\end{aligned}
\end{equation*}
Then we have
\begin{align}
&\|\tilde\omega_2 L\k({|w|^p-|\bar w|^p})\|_{L_{t, r}^\infty}\leq C\|\tilde\omega_1(w, \bar w)\|_{L_{t, r}^\infty}^{p-1}\|\tilde\omega_1(w-\bar w)\|_{L_{t, r}^\infty}, \label{e4.8}\\
&\|\tilde\omega_1 r^{-1}K_+\k({|v|^q-|\bar v|^q})\|_{L_{t, r}^\infty}\leq C\|\tilde\omega_2(v, \bar v)\|_{L_{t, r}^\infty}^{q-1}\|\tilde\omega_2(v-\bar v)\|_{L_{t, r}^\infty}. \label{e4.9}
\end{align}
\end{lmm}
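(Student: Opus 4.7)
The plan is to derive \cref{e4.8} and \cref{e4.9} from Lemma \ref{l4.1} by a Lipschitz-type linearization of the power nonlinearity, so that no genuinely new integral computation is required. The starting point is the elementary pointwise inequality
$$\bigl||a|^{p}-|b|^{p}\bigr|\le p\,(|a|+|b|)^{p-1}|a-b|,$$
valid for $p\ge 1$, applied with $(a,b)=(w(s,\rho),\bar w(s,\rho))$, and the analogous inequality with exponent $q$ applied to $(v,\bar v)$. This turns the integrands inside $L$ and $r^{-1}K_{+}$ into the same polynomial combinations that appear in the proof of \cref{e4.5} and \cref{e4.6}, except that one factor of $|w|$ (respectively $|v|$) is replaced by $|w-\bar w|$ (respectively $|v-\bar v|$), while the remaining $p-1$ (respectively $q-1$) factors are drawn from $\{|w|,|\bar w|\}$ (respectively $\{|v|,|\bar v|\}$).

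Using the pointwise bounds
$$|w|,|\bar w|\le \tilde\omega_1(s,\rho)^{-1}\max\bigl(\|\tilde\omega_1 w\|_\infty,\|\tilde\omega_1\bar w\|_\infty\bigr),\qquad |w-\bar w|\le \tilde\omega_1(s,\rho)^{-1}\|\tilde\omega_1(w-\bar w)\|_\infty,$$
and the analogous ones for $v,\bar v$ in terms of $\tilde\omega_2$, I would factor the $L^\infty$ norms outside the integral and be left with the two weighted kernel bounds
$$\tilde\omega_2(t,r)\,L\bigl(\tilde\omega_1^{-p}\bigr)(t,r)\le C,\qquad \tilde\omega_1(t,r)\,r^{-1}K_{+}\bigl(\tilde\omega_2^{-q}\bigr)(t,r)\le C,$$
uniformly on $\{t\ge 2,\ 0\le r\le t-1\}$. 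These are precisely the quantities that implicitly drive the proof of Lemma \ref{l4.1}, and they differ from $\omega_2\,L(\omega_1^{-p})$ and $\omega_1\,r^{-1}K_{+}(\omega_2^{-q})$ only through the replacement $\langle r\rangle\leadsto r$ in the $r<t/2$ portion of the weights; in the exterior region $r\ge t/2$ the weights $\tilde\omega_i$ and $\omega_i$ coincide, so the corresponding contribution is immediate from \cref{e4.5}-\cref{e4.6}.

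The main technical work is therefore confined to the interior regime $r<t/2$, and the anticipated obstacle is showing that the weaker near-axis weight $r$ (in place of $\langle r\rangle$) does not cause a divergence in $L(\tilde\omega_1^{-p})$ or $K_{+}(\tilde\omega_2^{-q})$ as the integration variable $\rho$ approaches the axis. Two features save the day: the Jacobian $\rho\,d\rho$ in the definitions of $L$ and $K_\pm$ precisely absorbs the singular factor $\rho^{-1}$ coming from the $\tilde\omega_i^{-1}$, and the support condition $\{t-r\ge 1,\ t\ge 2\}$ inherited from $X_\varepsilon$ keeps $\rho$ bounded away from truly dangerous values. The admissibility condition \cref{e4.4} on $\mu$, together with the ranges $2<p<3$ and $1<q<2$ from \cref{e1.9}, then ensures that the piecewise weight exponents in $\langle t-r\rangle$ and $\langle t+r\rangle$ balance at the interface $r=t/2$, so the case-by-case argument of Lemma \ref{l4.1} applies with only cosmetic changes. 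Collecting constants yields \cref{e4.8} and \cref{e4.9}.
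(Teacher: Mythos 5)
Your overall strategy is the same as the paper's: the authors dispose of \eqref{e4.8} with the one--line remark that $\bigl||a|^p-|b|^p\bigr|\lesssim(|a|^{p-1}+|b|^{p-1})|a-b|$ reduces it to the computation already done for \eqref{e4.5}, and of \eqref{e4.9} by rerunning the $r\ge 1/4$ computation of \eqref{e4.6} for all $r$, without the $\partial_r v$ correction. So the reduction to the kernel bounds $\tilde\omega_2\,L(\tilde\omega_1^{-p})\le C$ and $\tilde\omega_1 r^{-1}K_+(\tilde\omega_2^{-q})\le C$ is the right move. However, the two reasons you offer for why these bounds survive the replacement $\langle r\rangle\leadsto r$ are both incorrect as stated, and since you yourself flag this as the main technical point, the gap should be closed. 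First, the support condition $t-r\ge 1$ constrains the integration variable from above ($\rho\le s-1$), not from below: the domain $D$ always touches the corner $(s,\rho)=(t-r,0)$, for every evaluation point $(t,r)$ --- including $r\ge t/2$, so the near--axis issue is not confined to the ``interior'' case either. Second, the Jacobian $\rho\,d\rho$ does not cancel the singularity: the weight enters to the power $p$ (resp.\ $q$), so the integrand of $L$ behaves like $\rho^{1-p}$ near the axis, and $1-p<-1$ is not locally integrable in $\rho$ alone.

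What actually saves the estimates is quantitative and uses \eqref{e1.9}. For $L$: near $\rho=0$ the slab of admissible $s$ has width $O(\rho)$, so after the inner integration one is left with $\frac1r\int_{t-r}^{t+r}(\tau-t+r)^{2-p}\cdots\,d\tau$, which converges because $2-p>-1$, i.e.\ $p<3$; carrying this out gives $r^{2-p}$ rather than $\langle r\rangle^{2-p}$, which is exactly why $\tilde\omega_2$ carries the factor $r^{p-2}$. For $K_+$: on the characteristic $\rho=|r-t+s|$ the integrand is $\rho^{1-q(p-2)}$ times bounded factors, and $1-q(p-2)>-1$ because $q<2$ and $p<3$, so the single $s$--integral converges outright (this is the $\tilde\omega_2$ analogue of \eqref{e4.22}). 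Finally, you should state explicitly why \eqref{e4.9} needs no $\|\omega_3 r\partial_r v\|$ term unlike \eqref{e4.6}: the prefactor $\tilde\omega_1 r^{-1}=\langle t-r\rangle^{\mu/p+q-2}$ stays bounded as $r\to0$ (whereas $\omega_1 r^{-1}\approx r^{-1}$ there), so the crude triangle inequality on the two characteristics suffices and the mean--value argument of the $r<1/4$ case is not needed. With these corrections your argument matches the paper's intended proof.
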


Here, we apply the fixed point theorem with mapping
$$P: (w, v)\mapsto(Pw, Pv)\dyw(\varepsilon \partial_t u_o+r^{-1}K_+|v|^q, \varepsilon v_o+L|w|^p). $$
\par Firstly we check that $P$ is well defined in $X_\varepsilon\rightarrow X_\varepsilon.$ By expression \cref{e4.1}-\cref{e4.3}, it is obvious that $ \supp(Pw, Pv)\subset \{t-r\geq 1,t\geq 2\}$ and $Pw, Pv, r\partial_rPv\in C(\R^+\times \R) .$ To estimate $\|(Pw, Pv)\|,$ we begin with
\begin{equation*}
\begin{aligned}
\|\omega_1Pw\|_{L_{t, r}^\infty}&\leq \varepsilon\|\omega_1\partial_t u_o\|_{L_{t, r}^\infty}+\|\omega_1r^{-1}K_+|v|^q\|_{L_{t, r}^\infty}.
\end{aligned}
\end{equation*}
\par Since $f\in C^2,$ $g\in C^1,$ and $\supp u_o \subset \{(t, r):3\geq t-r\geq 1\}$ where $\omega_1\lesssim\k<t>,$ we see
$$\|\omega_1\partial_t u_o\|_{L_{t, r}^\infty}\leq C_{f,g}.$$
Moreover, by \cref{e4.6}, and $(w, v)\in X_\varepsilon, $ we conclude
\begin{equation*}
\|\omega_1Pw\|_{L_{t, r}^\infty}\leq C_{f, g}\varepsilon+C\varepsilon^q.
\end{equation*}

The estimates for the remaining terms are similar, noticing $r\partial_r Pv=\partial_r(rPv)-Pv$ and $\omega_3\lesssim\omega_2,$ by \cref{e4.5} and \cref{e4.7} we finally have
\begin{equation*}
\|(w, v)\|\leq C_{f, g, \tilde f, \tilde g}\varepsilon+C\varepsilon^p+C\varepsilon^q\leq C_1\varepsilon
\end{equation*}
for $C_1\geq 2C_{f, g, \tilde f, \tilde g}$ and $\varepsilon$ small enough.
\par Similarly, by \cref{e4.8} and \cref{e4.9} we have $P$ is contraction in a weaker sense. However it is enough to obtain the fixed point $(u, v)$ which solves \cref{e4.1}-\cref{e4.3}. So we complete the proof.

\subsection{Proof of \cref{e4.5} and \cref{e4.8}}\
\par First we prove \cref{e4.5}. Let $r > 0.$ Considering $D=\{(s, \rho):t-r\leq s+\rho\leq t+r, 1\leq s-\rho\leq t-r\}$ is the influence domain of $(t, r)$ intersect with $\{(s, \rho):s-\rho\geq 1\}.$ Set $D_1=D\cap \{(s, \rho):\rho<s/2\},$ $D_2=D\cap \{(s, \rho):\rho\geq s/2\},$ then we find
\begin{equation}\label{e4.10}
\begin{aligned}
|\omega_2 L|w|^p|&\leq \omega_2\frac{C}{r}\int_{D}\rho\, \omega_1^{-p}(s, \rho)\|\omega_1 w\|_{L_{t, r}^\infty}^p\d\rho\d s\\
&= C\omega_2\|\omega_1 w\|_{L_{t, r}^\infty}^p\frac{1}{r}\k({\int_{D_1}+\int_{D_2}})\rho\, \omega_1^{-p}(s, \rho)\d\rho\d s.
\end{aligned}
\end{equation}
\part[$(s,\rho)\in D_1$]
Here $\k<{s}>\approx\k<{s-\rho}>\approx\k<{s+\rho}>,$ take $\tau=s+\rho$, $\sigma=s-\rho,$ by \cref{e4.4} we obtain
\begin{equation}\label{e4.11}
\begin{aligned}
\frac{1}{r}\int_{D_1}\rho\, \omega_1^{-p}(s, \rho)\d\rho\d s&=\frac{1}{r}\int_{D_1}\rho\, \k<{\rho}>^{-p}\k<{s-\rho}>^{-\mu-pq+2p}\d\rho\d s\\
&\leq \frac{C}{r}\int_{t-r}^{t+r}\int_{(t-r)/3}^{t-r}\k<{\tau-\sigma}>^{1-p}\k<{\tau}>^{-\mu-pq+2p}\d\sigma\d\tau\\
&\leq \frac{C}{r}\int_{t-r}^{t+r}\k<{\tau-t+r}>^{2-p}\k<{\tau}>^{p-3-\mu/pq}\d\tau.
\end{aligned}
\end{equation}
\subpart[$r<t/2$]
Here $t-r\approx t\approx t+r,$ we conclude
\begin{equation}\label{e4.12}
\begin{aligned}
RHS~of~\cref{e4.11}&\leq C\k<{t+r}>^{p-3-\mu/pq}r^{-1}\k({(r+1)^{3-p}-1^{3-p}})\\
&\leq C\k<{t+r}>^{p-3-\mu/pq}\k<{r}>^{2-p}\\
&= C\omega_2^{-1}.
\end{aligned}
\end{equation}
\subpart[$r\geq t/2$]
Here $r\approx t\approx t+r,$ similarly we have
\begin{equation}
\begin{aligned}
RHS~of~\cref{e4.11}&\leq C\k<{t}>^{-1}\bigg(\k<{t-r}>^{p-3-\mu/pq}\int_{t-r}^{2(t-r)}\k<{\tau-t+r}>^{2-p}\d\tau\\
&\hspace{50pt}+\int_{2(t-r)}^{t+r}\k<{\tau}>^{-1-\mu/pq}\d\tau\bigg)\\
&\leq C\k<t>^{-1}\k<{t-r}>^{-\mu/pq}\\
&\leq C\omega_2^{-1}.
\end{aligned}
\end{equation}
\part[$(s,\rho)\in D_2$]
Here $\k<{\rho}>\approx\k<{s}>\approx\k<{s+\rho}>,$ take $\tau=s+\rho$, $\sigma=s-\rho, $ then we conclude
\begin{equation}\label{e4.14}
\begin{aligned}
\frac{1}{r}\int_{D_2}\rho\, \omega_1^{-p}(s, \rho)\d\rho\d s&=\frac{1}{r}\int_{D_2}\rho\, \k<{s-\rho}>^{-\mu}\k<{s+\rho}>^{-p(q-1)}\d\rho\d s\\
&\leq \frac{C}{r}\int_{t-r}^{t+r}\k<{\tau}>^{1-p(q-1)}\d\tau\int_{1}^{t-r}\k<{\sigma}>^{-\mu}\d\sigma.\\
\end{aligned}
\end{equation}
\subpart[$r<t/2$]
Here we have
\begin{equation}
\begin{aligned}
RHS~of~\cref{e4.14}\leq C\k<{t-r}>^{1-p(q-1)}\k<{t-r}>^{1-\mu}\leq C\omega_2^{-1}.
\end{aligned}
\end{equation}
\subpart[$r\geq t/2$]
Here we have
\begin{equation}\label{e4.16}
\begin{aligned}
RHS~of~\cref{e4.14}\leq C\k<t>^{-1}\k<{t-r}>^{2-p(q-1)}\k<{t-r}>^{1-\mu}\leq C\omega_2^{-1}.
\end{aligned}
\end{equation}
\par Thus \cref{e4.10}-\cref{e4.16} give $|\omega_2 L|w|^p|\leq C\|\omega_1 w\|_{L_{t, r}^\infty}^p$ which completes the proof of \cref{e4.5}. The proof of \cref{e4.8} is similar, since that $\big|{|a|^q-|b|^q}\big|\lesssim\k({|a|^{q-1}+|b|^{q-1}})|a-b|$ for $q>1.$
\subsection{Proof of \cref{e4.6} and \cref{e4.9}}\
\par For \cref{e4.6}, we divide the proof into two main cases: $r\geq 1/4$ and $r<1/4.$

\part[$r\geq 1/4$]
For this situation, we see $r\approx \k<r>.$ Similar to the proof of \cref{e4.5} we have
\begin{equation}
\begin{aligned}
\k|{\omega_1 r^{-1}K_+|v|^q}|&\leq\|\omega_2 v\|_{L_{t, r}^\infty}^q \omega_1r^{-1}\int_0^t \frac{(r+t-s)}{\omega_2(s, r+t-s)^q}+\frac{|r-t+s|}{\omega_2(s, |r-t+s|)^q}\d s\\
&\equiv \|\omega_2 v\|_{L_{t, r}^\infty}^q \omega_1r^{-1}\int_0^t \Roma1+\Roma2\d s
\end{aligned}
\end{equation}
\subpart[$r<t/2$. Estimates about $\Roma1$]
In this part, we obtain $\omega_1r^{-1} \approx \k<{t-r}>^{\mu/p+q-2}.$

\subsubpart[$0\leq s< 2(r+t)/3$]
This means $r+t-s> s/2,$ then we conclude
\begin{equation}\label{e4.18}
\begin{aligned}
\int_0^{2(t+r)/3} \Roma1\d s&=\int_0^{2(t+r)/3} (r+t-s)\k<{2s-r-t}>^{-\mu/p}\k<{r+t}>^{-q}\d s\\
&\leq C \k<{t+r}>^{1-q}  \int_0^{2(t+r)/3}\k<{2s-r-t}>^{-\mu/p}\d s \\
&\leq C\k<{t-r}>^{2-q-\mu/p}\\
&\leq Cr\omega_1^{-1}.
\end{aligned}
\end{equation}
\subsubpart[$2(r+t)/3\leq s\leq t$]
This means $r+t-s\leq s/2,$ then similarly we have
\begin{equation}\label{e4.19}
\begin{aligned}
\int_{2(t+r)/3}^t \Roma1\d s&=\int_{2(t+r)/3}^t (r+t-s)\k<{r+t-s}>^{-q(p-2)}\k<{r+t}>^{-3q+pq-\mu/p}\d s\\
&\leq C \k<{t+r}>^{2-q(p-2)}\k<{r+t}>^{-3q+pq-\mu/p}\\
&\leq Cr\omega_1^{-1}.
\end{aligned}
\end{equation}
\subpart[$r\geq t/2$. Estimates about $\Roma1$]
Here we always have $r+t-s\geq s/2$. Then it is similar to \cref{e4.18}.

\subpart[$r\geq t/2$. Estimates about $\Roma2$]

\subsubpart[$0\leq s<t-r$]
\par Here  $|r-t+s|=t-s-r.$ It is similar to \cref{e4.18}-\cref{e4.19}.

\subsubpart[$t-r\leq s<2(t-r)$]
Here $r-t+s<s/2,$ then we have
\begin{equation}
\begin{aligned}
\int_{t-r}^{2(t-r)} \Roma2\d s&=\int_{t-r}^{2(t-r)} (r-t+s)\k<{r-t+s}>^{-q(p-2)}\k<{r-t+2s}>^{-3q+pq-\mu/p}\d s\\
&\leq C \k<{t-r}>^{2-q(p-2)}\k<{t-r}>^{-3q+pq-\mu/p}\\
&\leq Cr\omega_1^{-1}.
\end{aligned}
\end{equation}
\subsubpart[$2(t-r)\leq s<t$]
\par Here $r-t+s\geq s/2,$ then we also have
\begin{equation}
\begin{aligned}
\int_{2(t-r)}^{t} \Roma2\d s&=\int_{2(t-r)}^{t} (r-t+s)\k<{t-r}>^{-\mu/p}\k<{r-t+2s}>^{-q}\d s\\
&\leq C \k<{t-r}>^{-\mu/p}\int_{2(t-r)}^{t}\k<{r-t+2s}>^{1-q}\d s\\
&\leq C\k<{t-r}>^{-\mu/p}\k<{t+r}>^{2-q}\\
&\leq Cr\omega_1^{-1}.
\end{aligned}
\end{equation}
\subpart[$r<t/2$. Estimates about $\Roma2$]

\subsubpart[$0\leq s<t-r$]
It is similar to \cref{e4.18}-\cref{e4.19}.

\subsubpart[$t-r\leq s\leq t$]
Here we have
\begin{equation}\label{e4.22}
\begin{aligned}
\int_{t-r}^{t} \Roma2\d s&=\int_{t-r}^{t} (r-t+s)\k<{r-t+s}>^{-q(p-2)}\k<{r-t+2s}>^{-3q+pq-\mu/p}\d s\\
&\leq C \k<{r}>^{2-q(p-2)}\k<{t+r}>^{-3q+pq-\mu/p}\\
&\leq Cr\omega_1^{-1}.
\end{aligned}
\end{equation}
\part[$r<1/4$]
In this part $\k<r>\approx 1$. For the convenience of proof, we set $r^+=r+t-s$, $r^-=r-t+s.$ Then we get
\begin{equation}\label{e4.23}
\begin{aligned}
|\omega_1r^{-1}K_+|v|^q|\leq C \omega_1r^{-1}\int_0^t \Big|{r^+|v|^q(s, r^+)+r^-|v|^q(s, |r^-|)}\Big|\d s
\end{aligned}
\end{equation}
where
\begin{equation*}
\begin{aligned}
\Big|{r^+|v|^q(s, r^+)+r^-|v|^q(s, |r^-|)}\Big|\leq&\Big|{r^+|v|^q(s, r^+)+r^-|v|^q(s, r^+)}\Big|\\
&+\Big|{r^-|v|^q(s, r^+)-r^-|v|^q(s, |r^-|)}\Big|
\end{aligned}
\end{equation*}
then
\begin{equation*}
\begin{aligned}
RHS~of~\cref{e4.23}&\leq C \omega_1\k({\int_0^t|v|^q(s, r^+)\d s+r^{-1}\int_0^t|r^-| \Big||v|^q(s, r^+)-|v|^q(s, |r^-|)\Big|\d s})\\
&\equiv C \omega_1\Roma1+C \omega_1\Roma2.
\end{aligned}
\end{equation*}
\subpart[Estimates about $\Roma1$]
It is similar to \cref{e4.18}-\cref{e4.19}.

\subpart[Estimates about $\Roma2$]
For this part, we should consider $\partial_r v.$ Since $r^+-|r^-|\leq 2r,$ we obtain
$$r^{-1}\Big||v|^q(s, r^+)-|v|^q(s, |r^-|)\Big|\leq C |\partial_r v(s, r^*)||v|^{q-1}(s, r^*)$$
for $|r^-|<r^*(s)<r^+.$ Then we have
\begin{equation*}
\begin{aligned}
\Roma2&\leq C \int_0^t|r^-||\partial_r v(s, r^*)||v|^{q-1}(s, r^*)\d s\\
&\leq C\|\omega_2 v\|_{L_{t, r}^\infty}^{q-1}\|\omega_3r\partial_rv\|_{L_{t, r}^\infty} \int_0^t\k({\omega_2^{-(q-1)}\omega_3^{-1}})(s, r^*)\d s\\
&\equiv C\|\omega_2 v\|_{L_{t, r}^\infty}^{q-1}\|\omega_3r\partial_rv\|_{L_{t, r}^\infty} \int_0^t\Roma3\d s
\end{aligned}
\end{equation*}
Here we notice $\k<t>\lesssim\k<{|r^-|+s+r}>\lesssim\k<{r^*+s}>.$

\subsubpart[$r^*<s/2$]
Here $ \k<{s\pm r^*}>\gtrsim \k<{r^*}>\gtrsim \k<{t-s}>,$ by \cref{e4.4}, we see
\begin{equation*}
\begin{aligned}
\omega_1\int_{2r^*<s<t}\Roma3\d s&\leq C\int_{2r^*<s<t} \k<{r^*}>^{(1-q)(p-2)}\\
&\hspace{50pt}\times\k<{s-r^*}>^{(1/p-1)(\mu+pq-2p)+(1-q)(3-p+\mu/pq)}\d s\\
&\leq C\int_{2r^*<s<t} \k<{r^*}>^{-1-\mu-pq+2p+\mu/pq}\d s\\
&\leq C\int_{2r^*<s<t} \k<{t-s}>^{p-4}\d s\\
&\leq C.
\end{aligned}
\end{equation*}
\subsubpart[$r^*\geq s/2$]
Here $\k<{s+r^*}>\gtrsim \k<{s-r^*}>\gtrsim \k<{2s-t}>,$ similarly we have
\begin{equation*}
\begin{aligned}
\omega_1\int_{s<2r^*\wedge t}\Roma3\d s\leq C\int_{s<2r^*\wedge t}\k<{2s-t}>^{p-4}\d s\leq C.
\end{aligned}
\end{equation*}
In summary, we complete the proof. To verify \cref{e4.9}, without distinguishing whether $r>1/4$ or not we have a proof just like \cref{e4.10}-\cref{e4.22}.
\subsection{Proof of \cref{e4.7}}\
\par We follow the same process as before
\begin{equation*}
\begin{aligned}
\k|{\omega_3 K_-|w|^p}|\leq\|\omega_1 v\|_{L_{t, r}^\infty}^p \omega_3\int_0^t \frac{r+t-s}{\omega_1(s, r+t-s)^p}+\frac{|r-t+s|}{\omega_1(s, |r-t+s|)^p}\d s.
\end{aligned}
\end{equation*}

Both part in the integration is similar to the last proof. Here we only show the proof of $r\geq t/2$, $(t-r)\leq s$ for $|r-t+s|$ part.

\part[$t-r\leq s<2(t-r)$]
Here $r-t+s<s/2$, then we see
\begin{equation*}
\begin{aligned}
\int_{t-r}^{2(t-r)} \frac{r-t+s}{\omega_1(s, r-t+s)^p}\d s&=\int_{t-r}^{2(t-r)} (r-t+s)\k<{r-t+s}>^{-p}\k<{r-t+2s}>^{-\mu-pq+2p}\d s\\
&\leq C \k<{t-r}>^{-\mu-pq+2p}\\
&\leq C\omega_3^{-1}.
\end{aligned}
\end{equation*}
\part[$2(t-r)\leq s\leq t$]
Here $r-t+s\geq s/2$, then we know
\begin{equation*}
\begin{aligned}
\int_{2(t-r)}^{t} \frac{r-t+s}{\omega_1(s, r-t+s)^p}\d s&=\int_{2(t-r)}^{t} (r-t+s)\k<{t-r}>^{-\mu}\k<{r-t+2s}>^{-pq+p}\d s\\
&\leq C \k<{t-r}>^{-\mu}\int_{2(t-r)}^{t}\k<{r-t+2s}>^{1-pq+p}\d s\\
&\leq C\k<{t-r}>^{-\mu}\k<{t-r}>^{2-pq+p}\\
&\leq C\omega_3^{-1},
\end{aligned}
\end{equation*}
which complete the proof.

\section{Proof of Theorem \ref{t1.4}}\label{s5}
Following \cite{MR2195336}, we introduce two positive test functions
\begin{equation*}
\phi(x)=\int_{S^{n-1}}e^{x\cdot \omega}\d\omega, \qquad \psi(t, x)=e^{-t}\phi(x),
\end{equation*}
where it is understood that $\phi(x)=e^{x}+e^{-x}$ when $n=1$.
It is well known that we have
\begin{prpstn}\label{l5.1}
For $\phi, \psi$ defined above, then we have
\begin{equation*}
\begin{aligned}
&\psi(t, x)\leq C \k<{r}>^{-(n-1)/2}e^{r-t}, \\
&\|\psi(t, x)\|_{L_x^s(r<t+1)}\leq C \k<{t}>^{(n-1)(1/s-1/2)},
\end{aligned}
\end{equation*}
for any $s\in [1,\infty]$.
\end{prpstn}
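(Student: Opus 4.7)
The plan is to reduce both assertions to the single pointwise bound
\[\phi(x) \le C\k<r>^{-(n-1)/2}e^r, \qquad r=|x|,\]
after which everything is direct integration. The case $n=1$ is trivial since $\phi(x)=e^x+e^{-x}$, so I assume $n\ge 2$.

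\textbf{Step 1 (pointwise bound on $\phi$).} Since $\phi$ is rotationally invariant, take $x=(r,0,\dots,0)$, so $\phi(x)=\int_{S^{n-1}}e^{r\omega_1}\,d\omega$. For $r\le 1$ the integrand is bounded, so $\phi$ is uniformly $O(1)$. For $r\ge 1$ I would apply Laplace's method at the unique maximum $\omega_1=1$: parametrize a neighbourhood of the north pole by $\omega'\in\R^{n-1}$ with $\omega_1=\sqrt{1-|\omega'|^2}$, expand $\omega_1 \approx 1-|\omega'|^2/2$, rescale $\omega'=y/\sqrt r$, and one obtains a Gaussian integral of total mass $C\,r^{-(n-1)/2}e^r$. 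Equivalently, one recognizes $\phi$ as a constant multiple of $r^{-(n-2)/2}I_{(n-2)/2}(r)$ and invokes the standard large-$r$ asymptotics of the modified Bessel function. Either route yields the stated upper bound uniformly in $r$.

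\textbf{Step 2 (the first assertion).} Multiplying the pointwise bound on $\phi$ by $e^{-t}$ gives $\psi(t,x)\le C\k<r>^{-(n-1)/2}e^{r-t}$ directly.

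\textbf{Step 3 ($L^s$ bound, $1\le s<\infty$).} In spherical coordinates,
\[\|\psi(t,\cdot)\|_{L_x^s(r<t+1)}^s \le C\,e^{-st}\int_0^{t+1}r^{n-1}\k<r>^{-s(n-1)/2}e^{sr}\,dr.\]
The $e^{sr}$ factor forces concentration at the upper endpoint; either the substitution $r=t+1-u/s$ or an integration by parts gives
\[\int_0^{t+1}r^{n-1}\k<r>^{-s(n-1)/2}e^{sr}\,dr \le C\,\k<t>^{n-1-s(n-1)/2}\,e^{s(t+1)},\]
and taking $s$-th roots cancels the exponential and yields $C\k<t>^{(n-1)(1/s-1/2)}$.

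\textbf{Step 4 ($s=\infty$).} Here I maximize $e^{r-t}\k<r>^{-(n-1)/2}$ over $r\in[0,t+1]$. Its logarithmic derivative is $1-\tfrac{n-1}{2}\,r\k<r>^{-2}$, which is positive once $r$ is bounded away from $0$, so the maximum is attained at $r=t+1$ and gives $\le C\k<t>^{-(n-1)/2}$, matching the claimed $s=\infty$ exponent.

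The only substantive step is the Laplace-method estimate in Step 1; after that the remaining work is bookkeeping for the endpoint contribution of the radial integral. Uniform validity for small $t$ is automatic since both sides of each inequality are $O(1)$ there, so no separate small-time argument is needed.
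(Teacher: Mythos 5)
Your argument is correct. The paper offers no proof of Proposition \ref{l5.1} at all --- it is quoted as ``well known'', following Yordanov--Zhang \cite{MR2195336} --- and your route (the Laplace-method/Bessel-asymptotics bound $\phi(x)\leq C\langle r\rangle^{-(n-1)/2}e^{r}$, then multiplication by $e^{-t}$ for the pointwise estimate and endpoint concentration of the radial integral for the $L^s$ estimate) is exactly the standard derivation. One small inaccuracy in Step 4: the logarithmic derivative $1-\tfrac{n-1}{2}r\langle r\rangle^{-2}$ is \emph{not} positive for all $r$ bounded away from $0$ when $n\geq 6$ (it is negative near $r=1$, where $r\langle r\rangle^{-2}=1/2$), so the maximum need not be attained at $r=t+1$; the conclusion nevertheless holds, since for $r\leq t/2$ the factor $e^{r-t}\leq e^{-t/2}$ is exponentially small, while for $t/2\leq r\leq t+1$ one has $\langle r\rangle^{-(n-1)/2}\leq C\langle t\rangle^{-(n-1)/2}$ and $e^{r-t}\leq e$.
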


We will prove Theorem \ref{t1.4} by contradiction. Suppose there are global weak solutions of \cref{e1.5} with $\supp (u,v)\subset\{(t, x):|x|\leq t+1\}$ and
\eqref{eq-1.11}.
As in \cite{MR2729246}, we define
\begin{equation*}
F(t)=\int_{\R^n} u(t, x)\psi(t, x)\d x, \qquad G(t)=\int_{\R^n} v(t, x)\psi(t, x)\d x.
\end{equation*}
Since $u, v, \partial_t u, \partial_t v\in C\big(\R_+;L^1(\R^n)\big),$ we know that $F, G\in C^1(\R_+)$
and
$$F'(t)=\int_{\R^n} u_t(t, x)\psi(t, x)+u(t, x)\psi_t(t, x)\d x
=\int_{\R^n} u_t(t, x)\psi(t, x)\d x-F(t)\ .$$
By the definition of weak solutions to \cref{e1.5}, with $\psi$ as test function, combined with the assumption $v\in C (\R_+;L^q)$,  we know that
$F\in C^2(\R_+)$ and
$$F''(t)=\int_{\R^n} |v|^q(t, x)\psi(t, x)\d x-2F'(t)\ .$$
Similarly, 
as $\partial_tu\in C (\R_+;L^p)$,
 we conclude $G\in C^2(\R_+)$ and
$$G''(t)=\int_{\R^n} |\partial_t u(t, x)|^p\psi(t, x)\d x-2G'(t)\ .
$$

Since $\supp (u,v)\subset\{(t, x):|x|\leq t+1\}$,
using \emph{H\"older's} inequality and Proposition \ref{l5.1}, we know that
\begin{equation}\label{e5.1}
F''(t)+2F'(t)=\int_{\R^n} |v(t, x)|^q\psi(t, x)\d x\geq C \k<{t}>^{-\frac{n-1}2(q-1)}|G(t)|^q,
\end{equation}
$$G''(t)+2G'(t)=\int_{\R^n} |\partial_t u|^p\psi \d x
\geq C \k<{t}>^{-\frac{n-1}2(p-1)}|F'(t)+F(t)|^p.$$

Notice that we have $F''(t)+2F'(t)\geq 0$ by \cref{e5.1}. By the assumption of data we have $F'(0)=\int_{\R^n} (g-f)\phi\d x\geq 0$, then it is easy to conclude that $F'(t)\geq 0$, for any $t\ge 0$. Moreover, since $F(0)=\int_{\R^n} f\phi\d x\geq 0, $ we have $F(t)\geq 0$ for all $t\geq 0.$ Then it is obvious that
\begin{equation*}
|F'(t)+F(t)|\geq \frac{1}{2}|F'(t)+2F(t)|.
\end{equation*}

By a similar argument,  since $\tilde f$ does not vanish identically, we have $G'(t)\geq 0$, $G(t)\geq C\varepsilon$. Set $H(t)=F'(t)+2F(t),$ then we have $H\in C^1(\R_+)$,
$G\in C^2(\R_+)$
 and
\begin{equation}\label{e5.2}
\begin{cases}
H'(t)\geq C
\k<{t}>^{-\frac{n-1}2(q-1)} G(t)^q, \\
H(t)\geq 0, \qquad H'(t)\geq 0; \\
G''(t)+2G'(t)\geq C \k<{t}>^{-\frac{n-1}2(p-1)
}H(t)^p, \\
G(t)\geq C\varepsilon, \qquad G'(t)\geq 0.
\end{cases}
\end{equation}
\begin{lmm}\label{l5.2}
For system \cref{e5.2}, assume \cref{e1.10} is satisfied, then for any $M>0,$ there exist constants $A, T$, which may depend on $\varepsilon$,  such that we have 
\begin{equation*}
G(t)\geq A \k<{t}>^M,
\end{equation*}
for any $t\geq  T$.
\end{lmm}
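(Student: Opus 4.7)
The plan is to run an iterative bootstrap. Starting from the baseline $G(t)\geq C\varepsilon$ given by \cref{e5.2}, I alternately feed lower bounds of $G$ into the first inequality to improve $H$, and lower bounds of $H$ into the third inequality to improve $G$. The two conditions in \cref{e1.10} play complementary roles: the second condition $\frac{n-1}{2}(q-1)<1$ allows the iteration to be started, while the first condition $\frac{p+1}{pq-1}>\frac{n-1}{2}$ guarantees that each round strictly amplifies the polynomial exponent of $\k<t>$, driving it to $+\infty$.

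The basic step runs as follows. Given $G(t)\geq A\k<t>^{a}$ for $t\geq T$, the first inequality in \cref{e5.2} together with $H\geq 0$ yields, for $t$ large,
\[
H(t)\;\geq\;C A^{q}\int_{T}^{t}\k<s>^{aq-\frac{n-1}{2}(q-1)}\d s\;\gtrsim\;A^{q}\k<t>^{\,aq+1-\frac{n-1}{2}(q-1)},
\]
provided $aq-\frac{n-1}{2}(q-1)>-1$. Conversely, from $H(t)\geq B\k<t>^{b}$, multiplying the third inequality of \cref{e5.2} by the integrating factor $e^{2t}$ gives $(G'e^{2t})'\geq C\k<s>^{-\frac{n-1}{2}(p-1)}H^{p}e^{2s}$. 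Since the $e^{2s}$ weight makes the integral comparable to its endpoint, integrating once produces $G'(t)\gtrsim B^{p}\k<t>^{\,bp-\frac{n-1}{2}(p-1)}$, and integrating a second time (in a region where the exponent exceeds $-1$) produces
\[
G(t)\;\gtrsim\;B^{p}\k<t>^{\,bp-\frac{n-1}{2}(p-1)+1}.
\]

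Composing the two halves, if $G(t)\geq A_{k}\k<t>^{a_{k}}$ on $[T_{k},\infty)$ we obtain $G(t)\geq A_{k+1}\k<t>^{a_{k+1}}$ on some $[T_{k+1},\infty)$, with the affine recursion
\[
a_{k+1}\;=\;pq\,a_{k}+(p+1)-\frac{n-1}{2}(pq-1).
\]
Its fixed point $a^{*}=\frac{n-1}{2}-\frac{p+1}{pq-1}$ is strictly negative by the first condition of \cref{e1.10}, so starting from $a_{0}=0>a^{*}$ the formula $a_{k}-a^{*}=(pq)^{k}(a_{0}-a^{*})$ together with $pq>1$ forces $a_{k}\to+\infty$. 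For any prescribed $M>0$ we pick $k$ with $a_{k}\geq M$ and set $A=A_{k}$, $T=T_{k}$, which delivers the desired $G(t)\geq A\k<t>^{M}$.

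The only delicate point is the very first iteration: kicking off from $a_{0}=0$ requires $0\cdot q-\frac{n-1}{2}(q-1)>-1$, which is exactly the second condition $\frac{n-1}{2}(q-1)<1$ of \cref{e1.10}. Once the first step closes, the constant term $(p+1)-\frac{n-1}{2}(pq-1)=(1-pq)a^{*}$ is positive (since $a^{*}<0$ and $pq>1$), so the $a_{k}$ are increasing and nonnegative, and the required positivity thresholds on later exponents hold automatically; the constants $A_{k}$ remain strictly positive at each finite stage, which is all that is needed.
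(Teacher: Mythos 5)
Your proposal is correct and follows essentially the same route as the paper: the same bootstrap alternating between the $H'$ inequality and the $e^{2t}$-integrating-factor argument for $G$, yielding the identical recursion $a_{k+1}=pq\,a_k+p+1-\tfrac{n-1}{2}(pq-1)$, with the two conditions of \cref{e1.10} playing exactly the roles you identify. Your fixed-point reformulation of why $a_k\to\infty$ is a cosmetic variant of the paper's observation that $\alpha_1>0$ and $\alpha_m>(pq)^{m-1}\alpha_1$.
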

\begin{lmm}\label{l5.3}
Under the same assumption of Lemma \ref{l5.2},
there are no $H\in C^1(\R_+)$,
$G\in C^2(\R_+)$ satisfying
 the system \cref{e5.2}.
\end{lmm}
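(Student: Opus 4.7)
\noindent\textbf{Proof Plan for Lemma \ref{l5.3}.} I would argue by contradiction: assume $H\in C^1(\R_+)$, $G\in C^2(\R_+)$ satisfy \eqref{e5.2} globally, and upgrade the polynomial lower bound of Lemma \ref{l5.2} to a super-polynomial one that drives $G$ to blow up at a finite time, contradicting $G\in C^2(\R_+)$. Write $\alpha=\tfrac{n-1}{2}(q-1)$ and $\beta=\tfrac{n-1}{2}(p-1)$; note $\alpha<1$ by the second inequality of \eqref{e1.10}.

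\smallskip

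\noindent\emph{Step 1 (integrated form).} Since $G'\geq 0$, the identity $(e^{2t}G'(t))'\geq C e^{2t}t^{-\beta}H(t)^p$ and integration over $[t/2,t]$, together with monotonicity of $H$, yield $G(t)\geq c\, t^{1-\beta}H(t/2)^p$ for $t\geq T_0$. The corresponding integration of $H'\geq Ct^{-\alpha}G^q$ on $[t/2,t]$ gives $H(t)\geq c\, t^{1-\alpha}G(t/2)^q$. Substituting the second into the first produces the self-referential bound
\[
G(t)\geq C\, t^{b}\, G(t/4)^{pq},\qquad t\geq T_1,
\]
with $b=p(1-\alpha)+1-\beta=p+1-\tfrac{n-1}{2}(pq-1)$. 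The first inequality of \eqref{e1.10} is exactly $b>0$.

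\smallskip

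\noindent\emph{Step 2 (rescaling and bootstrap).} With $\rho:=b/(pq-1)>0$ and $\phi(t):=G(t)/t^\rho$, the inequality becomes the clean functional form
\[
\phi(t)\geq C_0\,\phi(t/4)^{pq},\qquad t\geq T_1.
\]
Apply Lemma \ref{l5.2} with some $M>\rho$ to ensure $\phi(T^*)\geq K$ for any prescribed $K>1$, at some $T^*\geq T_1$. Iterating the functional inequality $k$ times gives
\[
\log\phi(4^kT^*)\geq (pq)^k\log K+\frac{(pq)^k-1}{pq-1}\log C_0,
\]
so $\phi$, hence $G$, grows faster than any exponential along $t=4^kT^*$. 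Propagating this to all large $t$ via monotonicity of $G$ and the factor $t^\rho$, we obtain $G(t)\geq \exp(c\,t^{\theta})$ on $[T^*,\infty)$ with $\theta=\log_4(pq)>0$, with the constant $c$ arbitrarily large by the freedom to choose $K$.

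\smallskip

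\noindent\emph{Step 3 (finite-time blow-up).} From the integrated inequalities in Step 1, $G'(t)\geq C\,t^{-\beta}H(t)^p\geq C\,t^{\gamma}G(t/2)^{pq}$ with $\gamma=p(1-\alpha)-\beta$. The super-exponential bound of Step 2 yields $G(t/2)^{pq}\geq G(t)^{pq/2^{\theta}}(1+o(1))=G(t)^{\sqrt{pq}}(1+o(1))$ on $[T_*,\infty)$ for some $T_*$, since $2^\theta=\sqrt{pq}$ and $G(t)/G(t/2)\leq \exp(c\,t^\theta(1-2^{-\theta}))\leq G(t/2)^{1-o(1)}$ for the very fast growth established above. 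Thus we arrive at a Riccati-type inequality
\[
G'(t)\geq c_1\,t^{\gamma}\,G(t)^{1+\eta},\qquad t\geq T_*,\quad \eta=\sqrt{pq}-1>0.
\]
Separating variables, $G(t)^{-\eta}$ is non-increasing and strictly decreases to zero in finite time, forcing $G$ to become infinite at some $t_*<\infty$. This contradicts $G\in C^2(\R_+)$.

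\smallskip

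\noindent The main obstacle is Step 3: converting the delay inequality $G'(t)\geq C t^\gamma G(t/2)^{pq}$ into a local Riccati inequality requires comparing $G(t/2)$ and $G(t)$, which is possible only because of the super-exponential growth rate $\log G(t)\gtrsim t^\theta$ established in Step 2 with the saturating exponent $\theta=\log_4(pq)$; this delicate matching between the self-similar iteration exponent and the delay ratio is what makes the argument close.
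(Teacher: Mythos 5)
Your Steps 1--2 are sound and are essentially a sharpening of the paper's own iteration for Lemma \ref{l5.2}: the recursion $\alpha_{m+1}=pq\,\alpha_m+b$ with $b=p+1-\tfrac{n-1}{2}(pq-1)>0$ is exactly what the paper runs, and pushing it to the multiplicative form $\phi(t)\geq C_0\phi(t/4)^{pq}$ does legitimately give $\log G(t)\gtrsim t^{\theta}$ with $\theta=\log_4(pq)$. The problem is Step 3, and it is fatal as written. To pass from the delay inequality $G'(t)\geq c\,t^{\gamma}G(t/2)^{pq}$ to the local Riccati inequality you need $G(t/2)^{pq}\gtrsim G(t)^{1+\eta}$, i.e. an inequality of the form $\log G(t/2)\geq 2^{-\theta}\log G(t)$. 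This is an \emph{upper} bound on the growth of $G$ (it says $G(t)$ is not too much larger than $G(t/2)$), and you only have the \emph{lower} bound from Step 2. Your justification ``$G(t)/G(t/2)\leq\exp(c\,t^{\theta}(1-2^{-\theta}))$'' assumes $\log G(t)\leq c\,t^{\theta}$, which is not derivable from the hypotheses --- indeed it is ultimately false, since $G$ must blow up in finite time. Concretely, $G(t)=\exp(t^{\theta'})$ with $\theta'>2\theta$ satisfies both of your derived inequalities $G(t)\geq Ct^{b}G(t/4)^{pq}$ and $G'(t)\geq c\,t^{\gamma}G(t/2)^{pq}$ for large $t$ and is globally defined, so no argument using only those two consequences can reach a contradiction; the passage to delayed times has discarded essential information from \eqref{e5.2}.

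The paper avoids this trap by never introducing a delay: it multiplies the two inequalities of \eqref{e5.2} against each other ($H'$ against $G'+2G$, and $(G'+2G)^p$ against $G''+2G'$), integrates by parts, and obtains the \emph{same-time} inequality
\begin{equation*}
\big(G'(t)+2G(t)\big)^{p+2}\geq C\,\langle t\rangle^{-\frac{n-1}{2}(pq-1)}\,G(t)^{pq+p+1},
\end{equation*}
in which both sides are evaluated at $t$. Since $pq+p+1>p+2$, Lemma \ref{l5.2} (used only to absorb the polynomial weight $\langle t\rangle^{-\frac{n-1}{2}(pq-1)}$) then yields $G'\geq cG^{1+\delta}$ directly. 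If you want to keep your architecture, you must replace Step 3 by some device of this kind that produces a non-delayed differential inequality from \eqref{e5.2} itself; the super-exponential lower bound of Step 2, while correct, cannot substitute for it.
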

This gives the desired contradiction, which completes the proof.

\subsection{Proof of Lemma \ref{l5.2}}~

At first, we see that
\begin{equation*}
G(t)\geq A_0 \k<t>^{\alpha_0},  \forall\, t\ge T_{0}, 
\end{equation*}
with
$A_0=C\varepsilon$,  $\alpha_0=0$,  $T_{0}=1$.
We claim that for any $m\ge 0$, there exists $A_m>0$ such that we have
\begin{equation}\label{eq-indk}
G(t)\geq A_m \k<t>^{\alpha_m},\ \forall\, t\ge T_{m},
\end{equation}
where
\beeq\label{eq-indk2}\alpha_{m+1}=
pq\alpha_m+p+1-\frac{n-1}2(pq-1)
,\ T_{m}=8^m T_{0}\ .\eneq

With help of the claim, we see 
from \cref{e1.10} 
that $$\alpha_1=p+1-\frac{n-1}2(pq-1)>0,
\al_m> (pq)^{m-1} \al_1, \forall m\ge 1\ ,
$$ which gives us the desired property
$\lim_{m\to \infty}\alpha_m=\infty$ and completes the proof.

It remains to prove the claim, for which we use induction.
Assuming that 
for some $k\ge 0$,
 we have \eqref{eq-indk} for any $m\le k$, then by \cref{e5.2} we obtain
$$H'(t) \geq C A_k^q \k<t>^{
-\frac{n-1}2(q-1)
+q\alpha_k},\ \forall\, t\ge T_{k}\ .$$
As
$\frac{n-1}2(q-1)<1$ by 
\cref{e1.10},
we have
$-\frac{n-1}2(q-1)
+q\alpha_k>-1$ and
\beeq \label{eq-5.3}H(t)\geq H(T_{k})+C A_k^q \int_{T_{k}}^t \k<s>^{-\frac{n-1}2(q-1)+q\alpha_k}\d s
\geq \tilde C  \k<t>^{-\frac{n-1}2(q-1)+1+q\alpha_k}
\eneq
for any $t\ge 2T_{k}$. 
Plugging the lower bound \eqref{eq-5.3} to the ordinary differential inequality for $G$ in \cref{e5.2}, we get
$$G''(t)+2G'(t)\geq C \tilde C^{p} \k<{t}>^{
-\frac{n-1}2(p-1)+p(-\frac{n-1}2(q-1)+1+q\alpha_k)}$$ for all $t\ge 2T_{k}$,
which, by using the multiplier $e^{2t}$ and integration, yields
$$G'(t)e^{2t}\geq C  \k<{t}>^{-\frac{n-1}2(pq-1)+p+pq\alpha_k}e^{2t}$$ for any $t\ge 4 T_{k}$.
Recall that
$p-\frac{n-1}{2}(pq-1)>-1$
by \cref{e1.10},
we have
$$G(t)\geq 
G(4T_k)+C\int_{4T_k}^t \k<{s}>^{-\frac{n-1}2(pq-1)+p+pq\alpha_k}ds
\ge
\tilde C\k<{t}>^{
-\frac{n-1}2(pq-1)+p+1+pq\alpha_k}$$
for all  $t\ge 8T_{k}$,
which gives us \eqref{eq-indk} with $m=k+1$ and completes the proof of the claim.

\subsection{Proof of Lemma \ref{l5.3}}~

 First of all we want to simplify the system \cref{e5.2}. For any $t\ge 0$, we have
\begin{equation*}
\begin{aligned}
H(t)\big(G'(t)+2G(t)\big)&=\int_{0}^t H'(s)(G'(s)+2G(s))\d s\\
&\hspace{10pt}+\int_{0}^t H(s)(G''(s)+2G'(s))\d s+H(0)(G'(0)+2G(0))\\
&\geq C\int_{0}^t \k<{s}>^{-\frac{n-1}2(q-1)}G(s)^q(G'(s)+2G(s))\d s\\
&\geq C\k<{t}>^{-\frac{n-1}2(q-1)}\int_{0}^t G(s)^qG'(s)\d s\\
&\geq \tilde C\k<{t}>^{-\frac{n-1}2(q-1)}\k({ G^{q+1}(t)-G^{q+1}(0)  }).
\end{aligned}
\end{equation*}
Similarly, we have
$$\big(G'(t)+2G(t)\big)^p\big(G''(t)+2G'(t)\big)\geq C \k<{t}>^{
-\frac{n-1}2(p-1)}\Big(H(t)\big(G'(t)+2G(t)\big)\Big)^p\ .$$
Gluing together the above two inequalities, we obtain
$$\frac{d}{dt}(G'(t)+2G(t)\big)^{p+1} \geq C \k<{t}>^{-\frac{n-1}2(pq-1)} \k({ G^{q+1}(t)-G^{q+1}(0)  })^p\ ,$$
which gives us
\beeq\label{eq-s5.3} (G'(t)+2G(t))^{p+1}\geq  C\k<{t}>^{-\frac{n-1}2(pq-1)} \int_{0}^t \k({ G^{q+1}(s)-G^{q+1}(0)  })^p\d s
\eneq
for any $t>0$. Here,
since $G, G'>0$ for $t>0$ and $G'+2G, G$ are monotonically increasing to infinity,
 we have
\begin{equation*}
\begin{aligned}
\int_{0}^t \k({ G^{q+1}(s)-G^{q+1}(0)  })^p\d s &\geq C\int_{0}^t \k({ G^{q+1}(s)-G^{q+1}(0)  })^p\frac{\k({G(s)^{q+1}})'}{\big(G'(s)+2G(s)\big)G(s)^q}\d s\\
&\geq C\frac{\int_{0}^t \k({ G^{q+1}(s)-G^{q+1}(0)  })^p{\k({G(s)^{q+1}})'}\d s}{\big(G'(t)+2G(t)\big)G(t)^{q}}\\
&= \frac{C\k({ G^{q+1}(t)-G^{q+1}(0)  })^{p+1}}{(p+1)\big(G'(t)+2G(t)\big)G(t)^{q}}\ .
\end{aligned}
\end{equation*}
Plugging it into \eqref{eq-s5.3}, we obtain
$$\big(G'(t)+2G(t)\big)^{p+2}\geq  C
\k<{t}>^{-\frac{n-1}2(pq-1)}
\frac{\k({ G^{q+1}(t)-G^{q+1}(0)  })^{p+1}}{G(t)^{q}}\ .
$$
Choosing a $\tilde T_1>0$ such that $G(\tilde T_1)^{q+1}\geq 2G(0)^{q+1},$ we have for any $t>\tilde T_1$
$$\big(G'(t)+2G(t)\big)^{p+2}\geq   C
\k<{t}>^{-\frac{n-1}2(pq-1)}
G(t)^{pq+p+1}\ .$$
As $pq+p+1>p+2$,
there exists a $\delta>0$ such that
$pq+p+1>(p+2)(1+\de)+\de$. Let $M=\frac{n-1}2(pq-1)/\de$ in Lemma \ref{l5.2}, we see that there are $\tilde T_2\ge \tilde T_1$ and $C>0$ such that
$$G'(t)+2G(t)\geq   C
G(t)^{1+\de}\ ,$$
for any $t\ge \tilde T_2$. Moreover we can always take a $\tilde T_3\ge\tilde T_2$ such that $C G(t)^{1+\de}>4G(t)$ for all $t\ge \tilde T_3$, and so we arrived 
at the desired
ordinary differential inequality
 $$G'(t)\geq   \frac{C}{2}
G(t)^{1+\de}\ ,\ 
G(\tilde T_3)>0
,$$
for any $t\ge \tilde T_3$,
which blows up in finite time. This completes the proof.

\subsection*{Acknowledgment}
The authors would like to thank the anonymous referee for the careful reading and valuable comments.
This work was supported by NSFC 11671353. The third author was supported in part by National Support Program for Young Top-Notch Talents.

\end{document}